\theoremstyle{definition}
\newtheorem{lemma}{Lemma}[section]
\newtheorem{proposition}[lemma]{Proposition}
\newtheorem{theorem}[lemma]{Theorem}
\newtheorem{corollary}[lemma]{Corollary}
\newtheorem{definition}[lemma]{Definition}
\newtheorem{example}[lemma]{Example}
\newtheorem{remark}[lemma]{Remark}
\newtheorem{algorithm}[lemma]{Algorithm}
\DeclareMathOperator{\lm}{lm}
\DeclareMathOperator{\lpos}{lpos}
\DeclareMathOperator{\POT}{POT}
\DeclareMathOperator{\Mon}{Mon}
\DeclareMathOperator{\Diag}{Diag}
\DeclareMathOperator{\Hom}{Hom}
\DeclareMathOperator{\Char}{char}
\DeclareMathOperator{\id}{id}
\DeclareMathOperator{\coker}{coker}
\DeclareMathOperator{\Ann}{Ann}
\DeclareMathOperator{\Quot}{Quot}
\newcommand{\N}{{\mathbb N}}
\newcommand{\R}{{\mathbb R}}
\newcommand{\K}{K}
\newcommand{\Z}{{\mathbb Z}}
\newcommand{\C}{{\mathbb C}}
\newcommand{\Q}{{\mathbb Q}}
\newcommand{\kg}{{\mathcal G}}
\newcommand{\kf}{{\mathcal F}}
\newcommand{\G}{{\mathcal G}}
\newcommand{\comment}[1]{}
\begin{document}
 \setcounter{section}{0}

\begin{frontmatter}

\title{Fraction-free algorithm for the computation of diagonal forms matrices over Ore domains using Gr{\"o}bner bases}

\author{Viktor Levandovskyy}\footnote{Corresponding author. Tel. +49 241 80 94546, Fax +49 241 80 92108}
\ead{Viktor.Levandovskyy@math.rwth-aachen.de}
\address{Lehrstuhl D f\"ur Mathematik, RWTH Aachen, Templergraben 64, 52062 Aachen, Germany}
\author{Kristina Schindelar}
\ead{kristina.schindelar@siemens.com}
\address{Siemens AG, Corporate Technology, Corporate Research and Technologies, Otto-Hahn-Ring 6,
81739 M\"unchen, Germany}


\begin{abstract}
This paper is a sequel to 
``Computing diagonal form and Jacobson normal form of a matrix using Gr\"obner bases'' \citep{LS10}.
We present a new fraction-free algorithm for the computation of a diagonal form of a matrix over a certain non-commutative Euclidean domain over a computable field with the help of Gr\"obner bases. 
This algorithm is formulated in a general constructive framework of non-commutative Ore localizations of $G$-algebras (OLGAs). 
We use the splitting of the computation of a normal form 
for matrices over Ore localizations into the diagonalization and the normalization processes. Both of them can be made fraction-free. 
For a given matrix $M$ over an OLGA $R$ we provide a diagonalization algorithm to compute $U,V$ and $D$ with fraction-free entries such that $UMV=D$ holds and $D$ is diagonal. The fraction-free approach
allows to obtain more information on the associated 
system of 
linear functional equations and its solutions, than the classical setup of an operator algebra with coefficients in rational functions.
In particular, one can handle distributional solutions together with, say, meromorphic ones. We investigate Ore localizations of common operator algebras over $\K[x]$ 
and use them in the unimodularity analysis of transformation matrices $U,V$.
In turn, this allows to lift the isomorphism of modules over an OLGA Euclidean domain to a smaller polynomial subring of it. We discuss the relation of this lifting with the solutions of the original system of equations.
Moreover, we prove some new results concerning normal forms of matrices over non-simple domains. Our implementation in the computer algebra system \textsc{Singular:Plural} follows the fraction-free strategy and shows impressive performance, compared with methods which directly use fractions. 
In particular, we experience moderate swell of coefficients and
obtain simple transformation matrices. Thus the method we propose is well suited for solving nontrivial practical problems.
\end{abstract}
\end{frontmatter}
\tableofcontents
\epigraph{We know what we are, but know not what we may be.}%
{William Shakespeare, ``Hamlet''.}

\section{Introduction}
This paper is a natural extension of our paper \citep{LS10}. We focus
on the algorithmic computation of a diagonal form of a given matrix
with entries in a Euclidean Ore algebra $A$. In contrast to commutative
principal ideal domains ($\Z$ or $\K[x]$ for a field $\K$), there is
no such strong normal form as the Smith normal form. However, in the
case when $A$ is a simple algebra, there is a celebrated Jacobson normal form \citep{Jacobson, Cohn} of remarkable structure. The existing proofs of diagonal and Jacobson normal forms offer an algorithm directly. However, such direct algorithms are rarely efficient in general.

We proposed the splitting of the computation of a normal form (like Jacobson form over simple domain) for matrices over Ore localizations into the \textit{diagonalization} and the \textit{normalization} processes. The diagonalization process can be formulated and applied in a fairly general setting, while the normalization depends on the algebra and on the properties of concretely given diagonal elements. 
Both diagonalization and normalization can be performed in a fraction-free fashion, and of course, the same ideas apply to the computation of the Smith normal form over a commutative Euclidean domain. As we demonstrate in 
Section \ref{NFGeneral},
the question of normal forms over non-simple domains is widely open and distinctly differs from the case where Jacobson form exists.


In this paper we concentrate on the diagonalization and present the fraction-free Algorithm \ref{diagonalPoly}, which is based on Gr{\"o}bner bases, and its implementation in \textsc{Singular:Plural}  \cite{Plural}. As a byproduct, we obtain a fraction-free algorithm for the computation of the Smith normal form over a commutative Euclidean domain.

Many operator algebras are non-commutative skew polynomial rings \citep{Kredel,ChyzakSalvy,BGV,Viktor,CQR}. We propose a new class of univariate skew polynomial rings, which are obtained as Ore localizations of $G$-algebras (OLGAs, see Def. \ref{OLGA}). This framework is powerful, convenient and constructive at the same time. 
Notably, it is possible to perform algorithmic computations like Gr\"obner bases for modules in such algebras. Moreover, many computations can be done in a fraction-free manner and hence they can be implemented in any computer algebra system, which can handle $G$-algebras or general polynomial Ore algebras.

Such algebras allow to describe time varying systems in systems and control theory \citep{Eva}, \citep{IlchmannMehrmann}, \citep{Ilchmann} and constitute a general framework for dealing with linear operator equations with variable coefficients. In \citep{CulianezQuadrat}, applications of a Jacobson form to systems of partial differential equations are shown and several concrete examples are introduced. 

In Section \ref{Solving} we discuss solutions to a given system of operator equations with variable coefficients. Starting with a polynomial ring $R_*$ and
its 'big' localization $R$, we denote by $M$ the presentation matrix $R_*$-module, corresponding to
the system of equations. We compute matrices $U,V$ and a diagonal matrix $D$, such that $UMV=D$ . 
Then, we determine a smaller localization $R'\subset R$ of $R_*$, over which both $U,V$ become invertible. We provide a module isomorphism for $R'$-modules, which allows to treat more general solutions, than those one can obtain with the $R$-module structure.

In Prop. \ref{cyclicAlg} we propose an algorithm for a probabilistic computation of a Jacobson form via a cyclic vector. Moreover, we provide both experimental data from computations and analysis of this data.

At the end, we pose five open problems. In the Appendix, we put the detailed explanation on the usage of our implementation.

\section{OLGAs and their Properties}
\label{Algebras}

By $\K$ we denote a computable field. Describing associative $K$-algebras via finite sets of generators $G$ and relations $R$, one usually writes $A = K\langle G \mid R \rangle = K\langle G \rangle / \langle R \rangle$. 
It means that $A$ is a factor algebra of the free associative algebra $K\langle G \rangle$ modulo the two-sided ideal, generated by $R$. Recall the following definition.

\begin{definition}
\label{Galg}
Let 
 $A$ be a quotient of the free associative algebra 
$\K\langle x_1,\ldots,x_n\rangle$ by the two-sided ideal $I$, generated by
the finite set $\{x_jx_i - c_{ij} x_ix_j-d_{ij} \} $ for all $ 1\leq i<j \leq n$, where
$c_{ij}\in \K^*$ and $d_{ij}$ are polynomials\footnote{Without loss of generality (cf. \citep{Viktor}) 
we can assume that $d_{ij}$ are given in terms of standard monomials $x_1^{a_1} \ldots x_n^{a_n}$} in $x_1,\ldots,x_n$.
Then $A$ is called a {\em $G$--algebra} \citep{LS03,Viktor}, if \\
$\bullet$ for all $ \; 1\leq i < j < k \leq n$ the expression
$c_{ik}c_{jk} \cdot d_{ij}x_k - x_k d_{ij} + c_{jk} \cdot x_j d_{ik} - c_{ij} \cdot d_{ik} x_j + d_{jk}x_i 
- c_{ij}c_{ik} \cdot x_i d_{jk}$
reduces to zero modulo $I$ and \\
$\bullet$ there exists a monomial ordering $\prec$ on $\K[ x_1,\ldots,x_n]$,
such that for each $i < j$ with $d_{ij}\not=0$, $\lm(d_{ij}) \prec x_i x_j$ . Here, $\lm$ stands for the classical notion of leading monomial of a polynomial from $\K[ x_1,\ldots,x_n]$.
\end{definition}
A monomial ordering on $K[x_1,\ldots,x_n]$ carries over to a $G$-algebra (and is called \textbf{admissible}) as in the definition as soon as it satisfies the second condition of the definition.

As in \cite{LS10},
we continue working with Ore localizations of $G$-algebras, which are principal ideal domains. A $G$-algebra $R$ is a Noetherian integral domain, hence there exists its total two-sided ring of fractions $\Quot(R) = (R\setminus\{0\})^{-1}R$, which is a division ring (skew field). It is convenient to define a class
of subalgebras between $R$ and $\Quot(R)$ as follows.

\begin{definition}
\label{OLGA}
Let $R$ be a $G$-algebra, generated by the set $X=\{x_1,\ldots,x_{n+1} \}$. Suppose, that
there is a $m$ with $1\leq m \leq n$ and a subset $Y = \{x_{i_1},\ldots,x_{i_m}\}$, such that $d_{i_j,i_k}$ does not involve other variables than those from $Y$. Then $B := \K\langle Y \mid \{x_jx_i - c_{ij} x_ix_j-d_{ij} \mid i<j, x_i,x_j \in Y \} \rangle$ is a $G$-algebra. For a proper completely prime ideal $I\subset B$,
$B/I$ is a domain. If, in addition, $S:=(B/I)\setminus\{0\}$\footnote{Note, that $I$ is an ideal in $B$ and may not be an ideal in $R$. Moreover, elements of $S$ are identified with residue classes modulo $I$.} is an Ore set in $A$, then we call $S^{-1}R$ an OLGA (Ore-localized $G$-algebra). It is encoded via the triple
$(R, B, I)$.
\end{definition}

From now on (for simplicity) we consider only OLGAs of the form $(R,B,0)$.

Prop. 28 of \citep{GML} implies the following characterization of OLGAs.
Let $Z = X \setminus Y$ in the notations as in the above definition.
If there exists an admissible $(Y,Z)$-block ordering on $R$, then $B\setminus\{0\}$ is an Ore set in $R$. 

If we put $m=n$ in Definition \ref{OLGA}, a corresponding OLGA is a Euclidean domain, which we then shorten as OLGAED. Let $X \setminus Y = \{ \d \}$.
In \cite{LS10}, we proved in Theorem 2.6, that for the case of OLGAED it is enough to require the existence of an admissible ordering on $A$, which satisfies $\d > x_{i_j}$ for all $1\leq j \leq n$. 
Moreover, the OLGAED $(B\setminus\{0\})^{-1}R$ can be presented as an Ore extension of $\Quot(B)$ by the variable $\d$.

\subsection{Notations} 
\label{orderingPOT}

In what follows we will work with OLGAED, given as Ore extension $R = A[\d; \sigma, \delta]$, where $A=\Quot(A_*)$ and $A_*$ is a $G$-algebra in variables $\{x_1,\ldots,x_n\}$. 
The computations will be
performed in a $G$-algebra $R_* = A_*[\d; \sigma, \delta]$\footnote{Note, that $A_* \subset A \subset R$ and $A_* \subset R_* \subset R$.}
with respect to the monomial module ordering $\POT$ (position-over-term), defined as follows. For $r, s \in \Mon(R_*)$ and the canonical basis $\{e_k\}$ of a free module of finite rank,
\begin{align*}
& r e_i < s e_j \;\; \Leftrightarrow \;\; i<j \mbox{ OR } (i=j \mbox{ and } r < s),
\end{align*}
and $r<s$ with respect to an admissible well-ordering on $R_*$, in which $\d$ is bigger
than any monomial not involving $\d$.
In $R$, a Gr\"obner basis is computed with respect to the induced POT ordering.

\section{Fraction-free or Polynomial Strategy}
\label{SecPoly}

Suppose a matrix $M$ over a non-commutative Euclidean domain $R$ is given. 
Without loss of generality, we suppose that $M$ does not contain a zero row.
In this section, we show our main approach of this paper. We introduce a method
that allows to execute Algorithm 3.5 from \cite{LS10} in a completely fraction-free framework. The idea comes from commutative algebra (see e.~g. \citep{GTZ}).
Gr\"obner bases were used for the computation of commutative Smith forms in e.~g. \citep{Spanier}.

We define the \textbf{degree} of an element in $R_*$ 
to be the weighted degree function with
weight $0$ to any generator of
$A_*$ and weight $1$ to $\partial$. Thus this weighted degree of $f\in R_*$
coincides with the degree of $f$ in $R$ and it is invariant under the multiplication by nonzero elements in $A_*$.

\begin{lemma}\label{inMod}
Let $M \in R^{p \times q}$. 
Then there exists a diagonal $R$-unimodular matrix $T\in A_*^{p \times p }$ 
such that $TM \in {R_{*}}^{p \times q}$. Moreover, the computation of such $T$ is algorithmic.
\end{lemma}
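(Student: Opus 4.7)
The plan is to build $T$ as a diagonal matrix of common left denominators, one for each row of $M$, using the fact that $R$ is an Ore localization of $R_*$ at $A_* \setminus \{0\}$ (as recalled in the earlier part of the paper).

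First I would write each entry of $M$ as a left fraction with denominator in $A_*$. Because $R = A[\d;\sigma,\delta]$ with $A = \Quot(A_*)$, every element of $R$ is a $\d$-polynomial whose coefficients lie in $A$, and since $A_*$ is a Noetherian domain with $A_* \setminus \{0\}$ forming an Ore set in $R_*$, every element $r \in R$ can be presented as $r = s^{-1} r_*$ with $s \in A_* \setminus \{0\}$ and $r_* \in R_*$. Applying this to the entries of $M$, for each $i,j$ I get $M_{ij} = s_{ij}^{-1} r_{ij}$ with $s_{ij} \in A_* \setminus \{0\}$ and $r_{ij} \in R_*$.

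Next, for each row $i$ I would produce a single nonzero left denominator $t_i \in A_*$ for the entire row. Because $A_*$ is a $G$-algebra it is a left (and right) Ore domain, so the intersection of left ideals $\bigcap_{j=1}^{q} A_* s_{ij}$ is a nonzero left ideal; pick any nonzero $t_i$ in it. Then for every $j$ there exists $u_{ij} \in A_*$ with $t_i = u_{ij} s_{ij}$, hence
\[
t_i M_{ij} \;=\; t_i s_{ij}^{-1} r_{ij} \;=\; u_{ij} r_{ij} \;\in\; R_*,
\]
since $A_* \subset R_*$ and $R_*$ is closed under multiplication. Setting $T := \Diag(t_1,\ldots,t_p)$ therefore gives $TM \in R_*^{p \times q}$. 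Unimodularity over $R$ is immediate: each $t_i$ is a nonzero element of $A_* \subset A$, and $A$ is a skew field contained in $R$, so $t_i^{-1} \in A \subset R$, and $T^{-1} = \Diag(t_1^{-1},\ldots,t_p^{-1})$.

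The only non-obvious step, and the place where the main work sits, is making the row denominator $t_i$ effectively computable rather than merely existentially guaranteed. For this I would reduce to iterated pairwise left common multiples: given $s_{i1},\ldots,s_{iq} \in A_*$, compute $A_* s_{i1} \cap A_* s_{i2}$ via a left Gröbner basis computation in $A_*$ (which is available because $A_*$ is a $G$-algebra, and intersections of left ideals are a standard syzygy/elimination computation there), then intersect the result with $A_* s_{i3}$, and so on. The output is a nonzero generator of $\bigcap_j A_* s_{ij}$, which serves as $t_i$. Since the number of rows and the number of entries per row are finite, the whole construction of $T$ terminates, yielding the desired algorithm.
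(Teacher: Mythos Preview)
Your proposal is correct and follows essentially the same approach as the paper: both construct $T$ as the diagonal of row-wise common left denominators, obtained from the Ore property of $A_*$. The paper phrases the pairwise step as finding $h_1,h_2$ with $h_1 a = h_2 c$ (i.e., a syzygy computation), which is exactly your intersection $A_* s_{i1}\cap A_* s_{i2}$ computed via Gr\"obner bases, iterated over the row.
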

\begin{proof}
If $M \in {R_{*}}^{p \times q }$, there is nothing to do. Suppose that $M$ contains
elements with fractions.
At first, we show how to bring two fractional elements $a^{-1}b, c^{-1}d$ for
$a,c \in A_*$, $b,d \in R_*$ to a common left denominator, cf. \citep{ApelDiss}.
For any $h_1,h_2 \in A_*$, such that $h_1 a = h_2 c$, it is easy to see that
\[
(h_1 a)^{-1} (h_1 b) = a^{-1} h_1^{-1} h_1 b = a^{-1}b \text{ and }
(h_1 a)^{-1} (h_2 d) =  (h_2 c)^{-1}(h_2 d) = c^{-1}d,
\]
hence $(h_1 a)^{-1} = a^{-1} h_1^{-1} = (h_2 c)^{-1}$ is a common left denominator.
Analogously we can compute a common left denominator for any finite
set of fractions.
Let $T_{ii}$ be a common left denominator of all non-zero elements in the 
$i$-th row of $M$, then $TM$ contains no fractions. Moreover, $T$ is a diagonal matrix with non-zero entries from $A_*$, hence it is $R$-unimodular.
\end{proof}

\begin{remark}
Note that the computation of compatible factors $h_i$ for $a_1,a_2 \in A_*$
can be achieved by computing syzygies, since 
$\{(h_1,h_2)\in A_*^2 \mid h_1 a_1 = h_2 a_2 \}$ is precisely
the module $Syz(a_1,-a_2)\subset A_*^2$. The factors $h_i$ for more $a_i$'s can be obtained as well.
\end{remark}

\noindent
\textbf{Notation}. By $\mathcal{G}({}_{R_*}M_*)$ we denote the reduced left Gr{\"o}bner basis of the submodule ${}_{R_*}M_*$ with respect to the module ordering $<_*$
on $R_*$, defined in (\ref{orderingPOT}). 
Note, that monomials of $R_*^{1 \times q}$ are of the form $x_1^{\alpha_1} \cdots x_n^{\alpha_n}\partial^\beta e_k$ for $\alpha_i, \beta \in \N$, $1\leq k \leq q$ and $e_k$ is the $k$-th canonical basis vector.
Let $m$ be a nonzero vector with entries in $R_*$. Then by $\lm(m)$ we denote the leading monomial of $m$ with respect to $<_*$ and by $\lpos(m)=k$ the leading position of $\lm(m)$. By $\deg(m)$ we denote $\deg(\lm(m))=$ $\deg(x_1^{\alpha_1} \cdots x_n^{\alpha_n}\partial^\beta e_k) = \beta$.

For $M\in R^{p \times q}$, we denote by ${}_R M = R^{1\times p} M$ the left
$R$-module, generated by the rows of $M$.

Define $M_*:=T M \in R_*^{p \times q}$ using the notation of Lemma~\ref{inMod}.
Then the relations ${}_{R_*}M_* \subseteq {}_{R}M$ and 
${}_{R}M_*={}_{R}M$ hold obviously.
Thus whenever we speak about a finitely generated 
submodule ${}_{R}M \subset R^{1 \times q}$, we denote by ${}_{R}M_*$
a presentation of ${}_{R}M$ with generators contained in $R_*$.
In what follows, we will show how to find $R$-unimodular matrices $U \in R_*^{p \times p}$
and $V \in R_*^{q \times q}$ such that
\begin{footnotesize}
\begin{displaymath}
 U (TM) V = \left[ \begin{array}{ccc} r_1 & & \\  & \ddots &  \\ 
& & r_q \\ & 0 &   \end{array} \right] \in R_*^{p \times q}.
\end{displaymath}
\end{footnotesize}

\noindent
Since $U(TM)V = (UT)MV$
and $UT$ is a $R$-unimodular
matrix, our initial aim follows.\\

\begin{remark}\label{PolyIsBetter}
Using the fraction-free strategy, two improvements can be observed.
On the one hand, once we have mapped the matrix we work with from $R^{p \times q}$ to $R_*^{p \times q}$, the complicated arithmetics in the skew field of fractions is not used anymore. 
The other improvement lies in the nature of the construction of normal forms
for matrices and the corresponding transformation matrices.
The naive approach would be to apply elementary operations inclusive division by invertibles
on the rows and columns, that is, operations from the left and from the right. 
Indeed, there are methods using different techniques like, for instance, $p$-adic arguments to 
calculate the invariant factors of the Smith form over $\Z$ \citep{Frank}, but
this method does not help in constructing transformation matrices.
Surely the swap from left to right has no influence in the commutative framework.
But already in the rational Weyl algebra $B_1$, 
$\frac{1}{x}$ is an unit in $B_1$ and 
$\begin{displaystyle} \partial \tfrac{1}{x} =  \tfrac{1}{x} \partial - \tfrac{1}{x^2} \end{displaystyle}$.
Comparing the multiplication by the inverse element, that is, with $x$, we see that 
$\partial x=  x \partial + 1$ holds.
Thus a multiplication of any polynomial containing $\d$ with the element $\frac{1}{x}$ in the field of fractions  
causes an immediate coefficient
swell. Since a normal form of a matrix is given modulo unimodular operations, 
the previous example illustrates the variations of possible representations. 
In examples, which we gather in the Subsection \ref{Examplez}, the fraction-free
strategy leads to a moderate increase of coefficients.

On the other hand, switching to the polynomial framework changes the setup.
The algebra $R_*$ is not a principal ideal domain anymore, which
was the essential property for the existence of a diagonal form over $R$.
In the sequel, we show how that this problem can be 
resolved by introducing a suitable sorting condition
for the chosen module ordering.
Referring to the argumentation of Lemma \ref{triangular} yields 
the block-diagonal form \ref{BlockTriangle} with the 0 block above. 

\begin{footnotesize}
\begin{align}\label{BlockTriangle}
 \mathcal{G}({}_{R_*} M_*)=\left[ 
 \begin{array}{cccc}
 0 &    \dots &    \dots & 0 \\
  \fbox{*} &  &  &  \\
  \vdots &  & 0 &  \\
  * &  & \;\; &  \\
   & \fbox{*} &  \;\; &  \\ 
   & \vdots &  \;\; & 0 \;\;\;\\ 
   & * & \;\; \\
   &  & \ddots & \\
    &      &    & \fbox{*} \\
   &    *  &    & \vdots \\
   &      &    & * \\
 \end{array}
 \right].
\end{align}
\end{footnotesize}
Moreover, the rows with the boxed element have the smallest leading monomial with respect to the chosen ordering in the corresponding block. A block denotes all elements of the same leading position in $\mathcal{G}({}_{R_*}M_*)$.
In Theorem \ref{polynomialBasis} we show that 
these elements indeed generate ${}_R M$, while in Lemma \ref{MinDeg} we show that these elements provide us with additional information.
However, this result requires some preparations.
\end{remark}

\begin{lemma}\label{smallestElementInPoly}
Let $P$ be $R$ or $R_*$. For $M \in P^{p \times q}$ of full rank and for every $1 \leq i \leq q$, define $\alpha_i:=\min\{ \deg(a) \mid a \in {}_P M \setminus \{0\} \mbox{ and } \lpos(a)=i\}$. 
Then for all $1 \leq i \leq q$, there exists $h_i \in \mathcal{G}({}_P M)$ of degree $\alpha_i$ with $\lpos(h_i)=i$.
\end{lemma}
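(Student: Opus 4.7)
The plan is to fix an index $i \in \{1,\dots,q\}$, pick any element $a \in {}_P M$ with $\lpos(a)=i$ attaining the minimum $\deg(a)=\alpha_i$, and then invoke the defining divisibility property of a left Gr\"obner basis to produce some $g \in \mathcal{G}({}_PM)$ whose leading monomial left-divides $\lm(a)$. From such a $g$ I will extract two facts: $\lpos(g)=i$, forced by the POT ordering, and $\deg(g)=\alpha_i$, forced by minimality together with the additivity of the $\partial$-degree. Setting $h_i:=g$ then proves the claim.

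First I would justify that $\alpha_i$ is well-defined, i.e., that the set in its definition is nonempty. Since $M$ has full rank $q$, the $\Quot(P)$-span of the rows of $M$ equals all of $\Quot(P)^{1\times q}$, so each standard basis vector $e_i$ is a $\Quot(P)$-combination of rows of $M$. Clearing a common left denominator (exactly as in Lemma \ref{inMod}) yields a nonzero $d \in P$ with $d \cdot e_i \in {}_PM$, and obviously $\lpos(d\,e_i)=i$. Hence $\alpha_i \in \N$ exists.

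Now I pick $a \in {}_PM$ with $\lpos(a)=i$ and $\deg(a)=\alpha_i$, and write $\lm(a)=x^{\alpha}\partial^{\beta}e_i$, so $\beta=\alpha_i$. By the definition of a left Gr\"obner basis, there are $g \in \mathcal{G}({}_PM)$ and a monomial $t \in \Mon(P)$ with $\lm(t\cdot g) = \lm(a)$. Writing $\lm(g)=x^{\gamma}\partial^{\delta}e_j$, the POT ordering immediately forces $j=i$, giving $\lpos(g)=i$. Additivity of the $\partial$-degree on nonzero products in the Ore extension $P = A[\partial;\sigma,\delta]$ over the domain $A$ yields $\beta = \deg(t) + \delta$, so $\deg(g)=\delta \leq \beta=\alpha_i$; conversely, since $g \in {}_PM$ satisfies $\lpos(g)=i$, the minimality in the definition of $\alpha_i$ forces $\deg(g) \geq \alpha_i$. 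Hence $\deg(g)=\alpha_i$ and $h_i:=g$ is as required. The only subtle ingredients are position-preservation under POT-divisibility and additivity of $\deg$ in the Ore extension; together they exclude a Gr\"obner basis element of a different leading position or of strictly smaller degree from dividing $\lm(a)$.
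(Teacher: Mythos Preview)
Your proof is correct and follows essentially the same approach as the paper: pick an element $a$ realizing the minimum $\alpha_i$, invoke the Gr\"obner basis divisibility property to obtain $g\in\mathcal{G}({}_PM)$ with $\lm(g)\mid\lm(a)$, and conclude from the POT ordering and $\partial$-degree additivity that $\lpos(g)=i$ and $\deg(g)=\alpha_i$. The paper phrases this as a short contradiction argument rather than a direct one, and it defers the well-definedness of $\alpha_i$ to a remark after the proof, but the substance is identical.
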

\begin{proof}
Recall that with respect to $<_*$, $\d$ is bigger than any monomial not involving $\d$.
Let $f \in {}_PM$ with $\lpos(f)=i$ and $\deg(f)=\alpha_i$. 
Suppose that for all $ g \in \mathcal{G}({}_P M)$ with 
leading position $i$, $\deg(g) > \alpha_i$ holds. Since $\mathcal{G}({}_P M)$ 
is a Gr{\"o}bner basis, there exists $g \in \mathcal{G}({}_P M) $  
such that $\lm(g)$ divides $\lm(f)$. This happens if and only if $\deg(g)\leq \deg(f)$ (because $R_*$ is a $G$-algebra and $R$ is an OLGAED), which yields a contradiction.
\end{proof}

The full rank assumption in the 
lemma guarantees the existence of $\alpha_i$ 
for each component $1 \leq i \leq q$. Note, that over $P=R_*$ the cardinality of
 $\{ \deg(a) \mid a \in {}_P M \backslash \{0\}$  and $\lpos(a)=i\}$ is
greater than one in general, hence there might be different selection strategies.
We propose to select an element according to $\min_{<_*}$
, see Lemma \ref{MinDeg}.
Recall the Lemma 3.3. from \citep{LS10}:

\begin{lemma}\label{triangular}
If one orders a reduced Gr\"obner basis in such a way, that
$\lm(\mathcal{G}({}_R M)_1) < \dots < \lm(\mathcal{G}({}_R M)_m)$,
then 
$\left[ \mathcal{G}({}_R M)_1, \ldots, \mathcal{G}({}_R M)_m \right]^T$
is a lower triangular matrix.
\end{lemma}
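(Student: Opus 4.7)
The plan is to combine two features of the POT module ordering with the reducedness of $\mathcal{G}({}_R M)$. First, by the defining inequality of POT, every monomial $r e_j$ with $j > k$ strictly exceeds any monomial $s e_k$; hence, if $g \in R^{1 \times q}$ has $\lpos(g) = k$, then every nonzero component of $g$ must sit in a position $\leq k$, for otherwise a term in some position $j > k$ would surpass $\lm(g)$. Applied to each $\mathcal{G}({}_R M)_i$, this forces row $i$ of the stacked matrix to vanish in all columns strictly greater than $\lpos(\mathcal{G}({}_R M)_i)$.

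Next, I would exploit the Euclidean structure of $R = A[\partial; \sigma, \delta]$, with $A$ a skew field, together with reducedness to conclude that the leading positions of distinct basis elements are pairwise distinct. If two elements $g, g'$ shared leading position $k$, their leading terms would be $a \partial^e e_k$ and $a' \partial^{e'} e_k$ with, say, $e \leq e'$. Because $A$ is a skew field, a suitable left multiple of $g$ by an element of $R$ can be arranged whose leading term equals $a' \partial^{e'} e_k$; subtracting this from $g'$ cancels the leading term of $g'$ without creating any term in a position $> k$, contradicting reducedness of the Gr\"obner basis.

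Combining the two ingredients, the sort $\lm(\mathcal{G}({}_R M)_1) < \dots < \lm(\mathcal{G}({}_R M)_m)$ together with the fact that POT compares positions first forces $\lpos(\mathcal{G}({}_R M)_1) < \dots < \lpos(\mathcal{G}({}_R M)_m)$, and in particular $\lpos(\mathcal{G}({}_R M)_i) \geq i$. The first step then implies that row $i$ has zero entries in all columns $> i$, which is exactly the assertion that $[\mathcal{G}({}_R M)_1, \dots, \mathcal{G}({}_R M)_m]^T$ is lower triangular (lower trapezoidal if the module does not have full rank $q$).

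The step I expect to be the principal obstacle is the cancellation argument establishing distinctness of leading positions: one must verify that the skew commutation relations in the Ore extension do not obstruct matching the leading coefficient by a suitable left multiplier in $R$. This is where the skew-field hypothesis on $A$ is essential, since every nonzero leading coefficient is then invertible and the classical left Euclidean division argument adapts verbatim to the non-commutative setting.
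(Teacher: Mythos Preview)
Your argument is correct. Note, however, that the present paper does not prove this lemma at all: it is quoted verbatim from the predecessor paper \citep{LS10} (as Lemma~3.3 there), so there is no in-paper proof to compare against. Your two-step strategy---first, that POT forces all nonzero components of $g$ to lie in positions $\leq \lpos(g)$; second, that distinct elements of a reduced Gr\"obner basis over the Euclidean domain $R$ have distinct leading positions---is exactly the standard route and matches what one finds in \citep{LS10}.

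One simplification worth noting: your ``principal obstacle'' is not really an obstacle. You do not need to perform the explicit subtraction in the second step. Over $R = A[\partial;\sigma,\delta]$ with $A$ a skew field, the monomials in position $k$ are the $\partial^e e_k$, and $\partial^e e_k \mid \partial^{e'} e_k$ whenever $e \leq e'$. So if two basis elements shared leading position $k$, the leading monomial of one would divide the leading monomial of the other, contradicting already the \emph{minimality} of a reduced Gr\"obner basis---no cancellation or skew-commutation bookkeeping is required. Your more hands-on argument is valid, but the divisibility observation dispatches the point in one line.
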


\begin{corollary}\label{smallestElementInPolyMat}
Lemma \ref{smallestElementInPoly} and Lemma \ref{triangular} yield
$$ \deg(\mathcal{G}({}_{R}M)_i)= \min\{ \deg(a) \mid a \in {}_R M\setminus \{0\} \mbox{ and } \lpos(a)=i\}.$$
\end{corollary}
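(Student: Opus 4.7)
The plan is to assemble the two preceding lemmas; essentially no independent argument is needed. First I would apply Lemma \ref{smallestElementInPoly} in the case $P = R$, which exhibits, for every component $1 \le i \le q$, an element $h_i \in \mathcal{G}({}_R M)$ with $\lpos(h_i) = i$ realizing the minimum degree $\alpha_i := \min\{\deg(a) \mid a \in {}_R M \setminus \{0\},\; \lpos(a) = i\}$. This already takes care of the ``$\ge$'' direction of the asserted equality (and the existence of $\alpha_i$), since any element of the Gröbner basis lies in ${}_R M$.

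To identify $h_i$ with the indexed entry $\mathcal{G}({}_R M)_i$ produced by the sorting convention, I would invoke Lemma \ref{triangular}: ordering the reduced left Gröbner basis by increasing leading monomial yields a lower triangular matrix, so the $i$-th row necessarily has leading position $i$. The point one must verify is that precisely one basis element appears per leading position. Since $R$ is an OLGAED and $<_*$ is a POT ordering, each component behaves like a principal left ideal in the Euclidean domain $R$; if two reduced basis elements shared leading position $i$, left division on their leading coefficients would produce a nontrivial reduction, contradicting reducedness. Hence the sorted list has exactly one entry per position, forcing $\mathcal{G}({}_R M)_i = h_i$ (up to the normalization of the reduced basis), and $\deg(\mathcal{G}({}_R M)_i) = \alpha_i$ as claimed.

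The only delicate ingredient is the uniqueness-per-position statement invoked in the second step. This is exactly the property that distinguishes the $R$-case from the $R_*$-case — over $R_*$ several Gröbner basis elements may share a leading position, which is precisely why a separate selection strategy (via $\min_{<_*}$) is discussed in the remarks following Lemma \ref{smallestElementInPoly}. Over the Euclidean ring $R$, however, this uniqueness is standard and constitutes no real obstacle, so the corollary is a direct consequence.
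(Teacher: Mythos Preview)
Your proposal is correct and matches the paper's approach: the paper offers no separate proof for this corollary, simply stating that Lemma~\ref{smallestElementInPoly} and Lemma~\ref{triangular} yield the result, and you have accurately supplied the details the authors leave implicit. In particular, your observation that over the Euclidean domain $R$ the reduced Gr\"obner basis has exactly one element per leading position (so that the sorted element $\mathcal{G}({}_R M)_i$ coincides with the $h_i$ from Lemma~\ref{smallestElementInPoly}) is precisely the content of Lemma~\ref{triangular} as established in \cite{LS10}, and is what distinguishes this case from the block-triangular $R_*$-situation.
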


\begin{lemma}\label{MinDeg}
Let $\alpha_i$ be the degree of the boxed entry with leading position
in the $i$-th column, that is
$$\alpha_i:=\deg( \, \min_{<_*}\{ \lm(b) \mid  b \in \mathcal{G}({}_{R_*}M_*) \mbox{ and } \lpos(b) =i \} 
\, ) .$$
Then for all $h \in {}_R M$ with $\lpos(h)=i$ we have $\deg(\lm(h)) \geq \alpha_i$.
\end{lemma}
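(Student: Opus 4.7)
The plan is to lift any $h \in {}_R M$ with $\lpos(h) = i$ into ${}_{R_*} M_*$ by a single left common denominator, without changing its leading position or its $\d$-weighted degree, and then to invoke Lemma~\ref{smallestElementInPoly} to compare it with the Gr\"obner basis.

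First I would exploit the equality ${}_R M = R^{1\times p} M_*$ and write $h = r M_*$ with $r \in R^{1\times p}$. The common left denominator construction from the proof of Lemma~\ref{inMod} (computable via the syzygy module indicated in the remark following it) produces a single nonzero $c \in A_*$ with $cr \in R_*^{1\times p}$, hence $ch = (cr)M_* \in {}_{R_*} M_*$. Because $R$ is a domain and $c \neq 0$, left multiplication by $c$ annihilates no entry of $h$, so $\lpos(ch) = \lpos(h) = i$. Since $c$ lies in $A_*$, to which the weighted degree assigns weight zero, and since this degree is invariant under multiplication by nonzero elements of $A_*$ (as noted just before Lemma~\ref{inMod}), one also obtains $\deg(ch) = \deg(h)$.

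Next, Lemma~\ref{smallestElementInPoly} applied to $P = R_*$ supplies some $g \in \mathcal{G}({}_{R_*} M_*)$ with $\lpos(g) = i$ and $\deg(g) = \min\{\deg(a) \mid a \in {}_{R_*} M_* \setminus \{0\},\ \lpos(a) = i\}$. Denoting this minimum by $\beta$, the steps above already yield $\deg(h) = \deg(ch) \geq \beta$, so it only remains to see that $\beta = \alpha_i$.

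For this last identification I would use that within a single leading position the POT ordering $<_*$ reduces to the admissible well-ordering on $R_*$ in which $\d$ exceeds every $\d$-free monomial. That hypothesis forces any strictly larger $\d$-degree to produce a strictly larger monomial, so the element of $\mathcal{G}({}_{R_*} M_*)$ whose leading monomial is smallest with respect to $<_*$ among those of leading position $i$, namely the boxed entry, is necessarily of minimum $\d$-degree in that subset; by Lemma~\ref{smallestElementInPoly} this minimum is precisely $\beta$, so $\alpha_i = \beta$ and the chain $\deg(h) \geq \alpha_i$ closes. The main subtlety I anticipate is the simultaneous preservation of leading position and $\d$-degree under non-commutative left multiplication by $c$; once the weighted-degree invariance cited before Lemma~\ref{inMod} is accepted, the rest is a direct combination of previously proved material.
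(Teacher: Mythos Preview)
Your proof is correct and follows essentially the same approach as the paper: clear denominators to pass from ${}_R M$ to ${}_{R_*}M_*$ without changing leading position or $\d$-degree, then invoke Lemma~\ref{smallestElementInPoly}. The paper argues by contradiction and is terser; you argue directly and are more explicit about why the $\alpha_i$ defined via the $<_*$-minimal leading monomial in the Gr\"obner basis agrees with the minimal $\d$-degree over ${}_{R_*}M_*$, a point the paper's proof uses but does not spell out.
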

\begin{proof}
Suppose that the claim does not hold and there is $h \in {}_R M$
with $\lpos(h)=i$ of degree smaller than $\alpha_i$. By Lemma
\ref{inMod}, there exists $a \in A_*$ such that 
$a h \in {}_{R_*}M_*$. Then $\deg(ah) = \deg(h)$ and 
$\lpos(ah)=i$. Due to Lemma \ref{smallestElementInPoly}, $\deg(f)\geq \alpha_i$
for all $f \in {}_{R_*}M_*$ with leading position $i$, hence we obtain a contradiction.
\end{proof}

\begin{corollary}\label{DegreeInv}
Lemma \ref{MinDeg} and Corollary \ref{smallestElementInPolyMat} imply, that 
for all $1 \leq i \leq q$
$$\min\{ \deg(a) \mid a \in {}_R M\setminus \{0\} \wedge \lpos(a)=i\}=
\min\{ \deg(a) \mid a \in {}_{R_*} M_*\setminus \{0\}  \wedge \lpos(a)=i\}.$$
\end{corollary}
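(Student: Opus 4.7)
The plan is to identify both sides of the claimed equality with the single quantity $\alpha_i$ that was defined in Lemma~\ref{MinDeg}, namely $\alpha_i = \deg(\min_{<_*}\{\lm(b) \mid b \in \mathcal{G}({}_{R_*}M_*), \lpos(b)=i\})$. The whole argument is a two-inequality sandwich using the inclusion ${}_{R_*}M_* \subseteq {}_{R}M$ (which was noted right after Lemma~\ref{inMod}) together with the two earlier lemmas.

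First, I would apply Lemma~\ref{smallestElementInPoly} with $P = R_*$. That lemma produces some $h_i \in \mathcal{G}({}_{R_*}M_*)$ of leading position $i$ whose degree attains the minimum $\min\{\deg(a) \mid a \in {}_{R_*}M_*\setminus\{0\},\ \lpos(a)=i\}$. Among all elements of $\mathcal{G}({}_{R_*}M_*)$ of leading position $i$, the one with smallest $\lm$ under $<_*$ (the boxed entry) certainly has degree no larger than $\deg(h_i)$, since $<_*$ refines the degree in $\d$. Hence the RHS of the claimed equality equals exactly $\alpha_i$.

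Second, for the LHS: Lemma~\ref{MinDeg} gives directly that every nonzero $h \in {}_R M$ with $\lpos(h)=i$ satisfies $\deg(h) \geq \alpha_i$, so the LHS is $\geq \alpha_i$. For the reverse inequality, take the boxed element $b \in \mathcal{G}({}_{R_*}M_*)$ realising $\alpha_i$; since $\mathcal{G}({}_{R_*}M_*) \subset {}_{R_*}M_* \subset {}_{R}M$ and $\lpos(b) = i$, $\deg(b)=\alpha_i$, this element is an admissible competitor in the LHS minimum, so LHS $\leq \alpha_i$. Combining gives LHS $= \alpha_i = $ RHS.

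There is no real obstacle here: the statement is the formal bookkeeping that the two preceding lemmas in fact characterise the same invariant, one computed inside the $G$-algebra $R_*$ and the other inside the Ore-localised ring $R$. The only point that requires a touch of care is invoking the degree-invariance under multiplication by nonzero elements of $A_*$ (used implicitly when passing between ${}_R M$ and ${}_{R_*}M_*$), but that was built into the definition of degree given at the beginning of Section~\ref{SecPoly}.
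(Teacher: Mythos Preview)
Your proposal is correct and is essentially a careful spelling-out of what the paper leaves implicit: the paper gives no proof beyond citing Lemma~\ref{MinDeg} and Corollary~\ref{smallestElementInPolyMat}, and your sandwich argument (LHS $\geq \alpha_i$ from Lemma~\ref{MinDeg}, LHS $\leq \alpha_i$ via the boxed element as competitor, RHS $= \alpha_i$ via Lemma~\ref{smallestElementInPoly} with $P=R_*$) is exactly how those citations cash out. The only remark is that you invoke Lemma~\ref{smallestElementInPoly} directly rather than Corollary~\ref{smallestElementInPolyMat}, but this is harmless since the latter is just the $P=R$ instance of the former combined with Lemma~\ref{triangular}; your route is arguably cleaner.
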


\begin{theorem}\label{polynomialBasis}
Let $M \in R^{p \times p}$ be of full rank.  
For each $1 \leq i \leq p$, let $\alpha_i$ be as in Lemma \ref{MinDeg}.
Let us define $b_i$ to be the element from $\{ b \in \mathcal{G}({}_{R_*}M_*): \lpos(b) =i, \deg(b) = \alpha_i \}$ with the smallest leading monomial.
Then ${}_R\langle b_1, \dots, b_p \rangle = {}_R M$. 
Moreover, the set $\{b_1, \ldots, b_p\}$ corresponds to the subset of all rows 
with a boxed entry in the block triangular form \ref{BlockTriangle}.  
\end{theorem}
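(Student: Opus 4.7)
The plan is to prove the nontrivial inclusion ${}_R M \subseteq {}_R\langle b_1,\dots,b_p\rangle$ by induction on the ``level'' of the POT ordering, exploiting the Euclidean structure of $R = A[\partial;\sigma,\delta]$ with respect to the $\partial$-degree. The reverse inclusion is immediate since each $b_i\in{}_{R_*}M_*\subseteq{}_R M$.

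For each $1\le i\le p$, set $N_i := \{m\in{}_R M : \lpos(m)\le i\}\cup\{0\}$. Because our ordering is POT and a nonzero component at position $j>i$ would force the leading position to be at least $j$, $N_i$ coincides with ${}_R M$ intersected with the $R$-submodule of $R^{1\times p}$ whose last $p-i$ components vanish, so $N_i$ is an $R$-submodule, and $N_p={}_R M$. Note $b_1,\dots,b_i\in N_i$. I would prove by induction on $i$ that $N_i={}_R\langle b_1,\dots,b_i\rangle$. The base case $i=1$ follows since $N_1$ is generated, as an $R$-module inside the free rank-one module $R\cdot e_1$, by the first-column entry of $g_1:=\mathcal{G}({}_R M)_1$, whose $\partial$-degree is $\alpha_1$ by Corollary \ref{smallestElementInPolyMat}. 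Since $b_1\in N_1$ also has $\partial$-degree $\alpha_1$, writing $b_1 = q g_1$ in $R$ forces $\deg_\partial(q)=0$, i.e.\ $q\in A\setminus\{0\}$, which is a unit; so $\langle b_1\rangle=\langle g_1\rangle=N_1$.

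For the inductive step, assume $N_{i-1}={}_R\langle b_1,\dots,b_{i-1}\rangle$ and take $m\in N_i$. If $\lpos(m)<i$ we are done by induction, so assume $\lpos(m)=i$. Denote by $m_i$ and $(b_i)_i$ the $i$-th components, viewed as elements of $R$; both are nonzero and their degrees in $\partial$ equal the weighted degrees of $m$ and $b_i$ respectively, namely $\deg(m)\ge\alpha_i$ and $\deg(b_i)=\alpha_i$ (by Lemma~\ref{MinDeg}). Right Euclidean division in the Ore extension $R=A[\partial;\sigma,\delta]$ yields $q\in R$ with $m_i=q\cdot(b_i)_i+r$ and $\deg_\partial(r)<\alpha_i$. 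Consider $m':=m-q b_i\in{}_R M$: its components of index $>i$ vanish (since both $m$ and $q b_i$ lie in $N_i$), and its $i$-th component is $r$. If $r\neq 0$ then $\lpos(m')=i$ and $\deg(m')=\deg_\partial(r)<\alpha_i$, contradicting Lemma~\ref{MinDeg}. Hence $r=0$, so $m'\in N_{i-1}$, and by induction $m=q b_i+m'\in{}_R\langle b_1,\dots,b_i\rangle$.

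The main technical point is the last paragraph's degree book-keeping: one has to be sure that the POT ordering and the weighted degree interact as expected, so that $\partial$-degree cancellation in the $i$-th component really suffices to drop the leading position, and that Lemma~\ref{MinDeg} (which controls $\deg_\partial$ across the whole OLGAED $R$, not just $R_*$) can be invoked on the remainder. Once this is set up, the induction is clean. The final ``moreover'' clause is then just a restatement of the selection rule used to define the $b_i$: by construction, for each leading position $i$ the chosen $b_i$ is the Gr\"obner basis element of minimal weighted degree $\alpha_i$ and minimal $<_*$-leading monomial in that block, which is precisely the row marked by the boxed entry in the block-triangular display \ref{BlockTriangle}.
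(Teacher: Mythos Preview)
Your proof is correct and follows essentially the same approach as the paper: Euclidean reduction in $R$ by the $b_i$'s, using the minimality of $\alpha_i$ (Lemma~\ref{MinDeg}/Corollary~\ref{DegreeInv}) to force the leading position to drop after each step. The paper simply runs this as an iterated reduction on an arbitrary $f\in{}_R M$ rather than packaging it as an induction over the filtration $N_i$, and it does not single out a separate base case (your $N_1$ argument via $\mathcal{G}({}_R M)_1$ is correct but unnecessary, since $N_0=\{0\}$ makes the inductive step cover $i=1$ as well).
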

\begin{proof}
Since $M$ is of full rank, the minimum in the definition of $b_i$ exists for each $1 \leq i \leq p$.
Let $f \in {}_RM \backslash \{0\}$. Due to Corollary \ref{DegreeInv}, there exists $1\leq k \leq p$ such that
$\lpos(b_k)=\lpos(f)$ and $\deg(b_k)\leq\deg(f)$. Thus there exists an element
$s_k \in R$ such that $\deg( f- s_kb_k)<\deg(b_k)$. Since $f- s_k b_k \in {}_RM$,
Corollary \ref{DegreeInv} implies that we have $\lpos(f- s_k b_k)< \lpos(f)$. Iterating this
reduction leads to the remainder zero and thus $f=\sum_{i=1}^k s_ib_i$.
\end{proof}

\noindent
\textbf{Notation}. Using the notation of the previous theorem, let
$\G^*({}_R M):=\left[ b_1, \ldots, b_g \right]^T$, which is by construction a lower triangular matrix.
In the sequel, let $M\in R^{p \times p}$ be of full rank. 
Then $\G^*({}_R M)$ is a square matrix.

Recall that an involutive anti-automorphism (or an \textbf{involution}) $\theta$ of a ring $A$ is a $K$-linear map, satisfying $\theta(a b) = \theta(b) \theta(a)$ for all $a,b \in A$ and $\theta^2 = \id_A$.
Moreover, we define by $\widetilde{\theta}(M)$ the
application of an involution $\theta$ to the entries of the transpose of $M$.

\begin{proposition}\label{trafoMatrix}
Suppose $M \in R^{p \times p}$ is a full rank matrix and there is $U_* \in R_*^{\ell \times p}$
such that $U_* M_* = \mathcal{G}({}_{R_*}M_*)$. 
Let us select the indices
\begin{align}\label{indexChoice}
\{t_1, \dots, t_p\} \subseteq \{1, \dots, \ell\} \mbox{ such that } 
\{(U_* M_*)_{t_1}, \dots, (U_* M_*)_{t_p}\} 
= \G^*({}_R M)
\end{align}
Then $U:=[(U_*)_{t_1}, \dots, (U_*)_{t_p}]^T$ is $R$-unimodular in $R^{p \times p}$
and $U M_* = \G^*({}_R M)$.
\end{proposition}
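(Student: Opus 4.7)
The identity $UM_* = \G^*({}_R M)$ is immediate from the construction: by the index choice \eqref{indexChoice}, the $j$-th row of $UM_*$ equals $(U_*)_{t_j} M_* = (U_* M_*)_{t_j} = b_j$, which is exactly the $j$-th row of $\G^*({}_R M)$. So the only substantive task is to prove that $U$ is $R$-unimodular.

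The plan is to realize both the rows of $M_*$ and the rows of $\G^*({}_R M)$ as $R$-generating sets of the same submodule ${}_R M \subseteq R^{1 \times p}$, and then to descend a uniqueness-of-transformation argument from $\Quot(R)$ to $R$ via the full rank of $M_*$. On one hand, ${}_R M_* = {}_R M$ was noted earlier, so the rows of $M_*$ generate ${}_R M$ over $R$. On the other hand, by Theorem \ref{polynomialBasis} the rows $b_1,\dots,b_p$ of $\G^*({}_R M)$ also generate ${}_R M$. Consequently each $b_i$ is an $R$-linear combination of the rows of $M_*$, yielding a matrix $W' \in R^{p \times p}$ with $W' M_* = \G^*({}_R M)$; symmetrically, each row of $M_*$ is an $R$-linear combination of the $b_i$, giving $W \in R^{p \times p}$ with $M_* = W\, \G^*({}_R M)$.

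Combining these two identities produces $M_* = W W' M_*$ and $\G^*({}_R M) = W' W\, \G^*({}_R M)$. Here I would invoke full rank twice: $M_*$ has full rank $p$ because $M$ does and $T$ from Lemma \ref{inMod} is diagonal with nonzero entries in $A_*$; and $\G^*({}_R M)$ has full rank because, by Lemma \ref{triangular}, it is lower triangular with nonzero diagonal entries coming from the boxed leading terms. Both are therefore invertible as matrices over the skew field $\Quot(R)$, so right-multiplying the two identities above by the respective inverses in $\Quot(R)$ gives $W W' = I_p$ and $W' W = I_p$ in $R^{p \times p}$, i.e.\ $W'$ is $R$-unimodular. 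Finally, from $U M_* = \G^*({}_R M) = W' M_*$ one gets $(U - W') M_* = 0$, and full rank of $M_*$ yields $U = W'$, so $U$ is $R$-unimodular.

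The step I expect to require the most care is performing the cancellations on the correct side in a non-commutative setting: there is no determinantal or commutative shortcut, so each cancellation must be phrased as multiplication by the appropriate inverse in the matrix ring over $\Quot(R)$, after which one observes that the resulting equality already holds in $R^{p \times p}$ because both sides do. Once this is set up cleanly, the existence of $W$ and $W'$ with entries in $R$ (not just in $\Quot(R)$) is a direct consequence of the module-theoretic equality of row spans over $R$.
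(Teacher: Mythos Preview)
Your proof is correct and follows essentially the same approach as the paper: both use that the rows of $M_*$ and of $\G^*({}_R M)$ generate the same $R$-module ${}_R M$, then cancel via full rank over $\Quot(R)$. The only difference is that you introduce an auxiliary $W'\in R^{p\times p}$ with $W'M_* = \G^*({}_R M)$ and afterwards show $U = W'$, whereas the paper observes that $U$ itself (having entries in $R_*\subset R$) already plays this role, so one can directly find $V\in R^{p\times p}$ with $M_* = V(UM_*)$ and conclude $VU = UV = \id_{p\times p}$ without the detour.
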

\begin{proof}
The equality $U M_* = \G^*({}_R M)$ follows by the definition of $U$. Now we show that $U$ is $R$-unimodular. 
Note that $UM_* \subset R^{p \times p} \supset M_*$ and ${}_R(U M_*) = {}_R \G^*({}_R M) = {}_R M = {}_R M_*$ holds. Thus there exists $V \in R^{p \times p}$ such that
$M_*=V(UM_*)$. Then $VU = \id_{p \times p}$ and analogously $UV=\id_{p \times p}$ since $M$ has full row rank.
\end{proof}

\begin{lemma}\label{columnGeneratorPoly}
The equality of the following left ideals holds:
$${}_R\langle \theta(\G^*( {}_{R}M)_{p1}), \dots, \theta(\G^*( {}_{R}M)_{pp})\rangle=
{}_R \langle \G^* (\widetilde{\theta}( \G^*( {}_{R}M)))_{pp} \rangle. $$
\end{lemma}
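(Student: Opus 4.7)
The plan is to reinterpret both sides of the equality as the image of one and the same coordinate projection applied to one fixed left submodule of $R^{1\times p}$; since the two sides will describe the same image, they must coincide.

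Set $L := \G^*({}_R M)$ and $N := \widetilde{\theta}(L)$. By the definition of $\widetilde{\theta}$, we have $N_{ij} = \theta(L_{ji})$, so the entries of the last column of $N$ are
\[
N_{ip} \;=\; \theta(L_{pi}) \;=\; \theta(\G^*({}_R M)_{pi}), \qquad i = 1, \dots, p.
\]
Hence the left-hand side of the claim is precisely the left ideal generated by the entries of the last column of $N$. Introduce the left $R$-linear projection $\pi_p : R^{1\times p} \to R$ onto the $p$-th coordinate. For any $P \in R^{p\times p}$, since ${}_R P$ is the row span of $P$, $R$-linearity of $\pi_p$ gives
\[
\pi_p({}_R P) \;=\; R \cdot P_{1p} + \cdots + R \cdot P_{pp} \;=\; {}_R\langle P_{1p}, \dots, P_{pp}\rangle.
\]
Taking $P = N$ identifies the LHS with $\pi_p({}_R N)$.

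For the RHS I would apply Theorem \ref{polynomialBasis} to the matrix $N$. This requires $N$ to be of full row rank, which I would check as follows: $L$ is lower triangular with $L_{ii} \neq 0$, because $\lpos(b_i) = i$ forces the $i$-th component of each row $b_i$ to be nonzero; therefore $\widetilde{\theta}(L) = N$ is upper triangular with diagonal entries $\theta(L_{ii})$, all of which are nonzero since $\theta$ is a bijection. The theorem then produces a $p \times p$ lower triangular matrix $L' := \G^*({}_R N)$ whose rows generate the same module ${}_R N$. Lower triangularity gives $L'_{ip} = 0$ for $i < p$, so the formula in the previous paragraph applied to $P = L'$ collapses to $\pi_p({}_R L') = {}_R\langle L'_{pp}\rangle = {}_R\langle \G^*(\widetilde{\theta}(\G^*({}_R M)))_{pp}\rangle$, which is exactly the RHS. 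Since ${}_R N = {}_R L'$ by construction, chaining
\[
\text{LHS} \;=\; \pi_p({}_R N) \;=\; \pi_p({}_R L') \;=\; \text{RHS}
\]
completes the argument.

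The only real obstacle, and it is modest, is verifying the full-row-rank hypothesis for $N$ needed to apply Theorem \ref{polynomialBasis}; the triangularity-plus-nonzero-diagonal observation dispatches it. The conceptual point behind the proof is that the $(p,p)$-entry of $\G^*$ applied to any presentation of a left submodule $K \subseteq R^{1\times p}$ is a generator of the invariant left ideal $\pi_p(K)$, so the identity is insensitive to which particular row-presentation of ${}_R N$ one chooses to start with.
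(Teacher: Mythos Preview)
Your argument is correct. The core identification of both sides with the projection $\pi_p$ of the left module ${}_R N$ onto the last coordinate is exactly the mechanism at work; the full-rank check for $N$ via the nonzero diagonal of the triangular matrix $L$ is the right way to justify the appeal to Theorem~\ref{polynomialBasis}.

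Your route differs from the paper's in execution. The paper first invokes Lemma~3.4 of \cite{LS10}, which gives the identity with the reduced Gr\"obner basis $\mathcal{G}(\widetilde{\theta}(\G^*({}_R M)))$ over $R$ on the right-hand side, and then argues separately that replacing $\mathcal{G}$ by $\G^*$ does not change the $(p,p)$-entry ideal (using that ${}_R\G^*({}_R M) = {}_R\mathcal{G}({}_R M)$ and that both outputs are lower triangular). You bypass this detour entirely: by working directly with $\G^*$ and Theorem~\ref{polynomialBasis}, you never need the external lemma from \cite{LS10} or the comparison between $\G^*$ and $\mathcal{G}$. Your version is therefore more self-contained within the present paper, while the paper's version has the advantage of exhibiting explicitly how the fraction-free statement reduces to its rational predecessor.
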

\begin{proof}
Using the argumentation given in the proof of Lemma 3.4. of \citep{LS10},
we obtain
$${}_R\langle \theta( \G^*( {}_{R}M)_{p1}), \dots, \theta( \G^*( {}_{R}M)_{pp})\rangle=
{}_R \langle \mathcal{G}(\widetilde{\theta}( \G^*( {}_{R}M)))_{pp} \rangle.$$

\noindent
Because of ${}_{R} \G^*( {}_{R}M) = {}_{R}\mathcal{G}( {}_{R}M)$ we have
$\widetilde{\theta}( \G^*( {}_{R}M))_{R} = \widetilde{\theta}(\mathcal{G}( {}_{R}M))_{R}$ and thus \\
\mbox{${}_{R} \G^*(\widetilde{\theta}(\G^*( {}_{R}M))) = {}_{R}\mathcal{G}(\widetilde{\theta}(\mathcal{G}( {}_{R}M)))$}.
Since both $\mathcal{G}(\widetilde{\theta}(\mathcal{G}( {}_{R}M))$ and 
$G^*(\widetilde{\theta}( \G^*( {}_{R}M)))$ are lower triangular matrices,
with the latter identity above we obtain
${}_{R}\langle \mathcal{G}(\widetilde{\theta}(\mathcal{G}( {}_{R}M)))_{pp} \rangle=
{}_{R}\langle \G^*(\widetilde{\theta}(G^*( {}_{R}M)))_{pp} \rangle$.
\end{proof}

Now we are ready to formulate the fraction-free version of the Algorithm 
3.5 \textit{Diagonalization with Gr\"obner bases} from \citep{LS10}.

\begin{algorithm}[\texttt{Fraction-free diagonalization with Gr\"obner Bases}]  
\label{diagonalPoly}
\begin{algorithmic}
\STATE 
\REQUIRE $M \in R^{p \times p}$ of full rank, $\theta$ an involution on $R_*$ and $\widetilde{\theta}$ as above.
\ENSURE $R$-unimodular matrices $U, V, D \in R_*^{p \times p}$ such that $U \cdot M \cdot V = D = \Diag ( r_1, \dots, r_p )$.
\STATE  Find $T \in R^{p \times p }$ unimodular such that $T M \in R_*^{p \times p}$
\STATE   $M^{(0)} \leftarrow T M$, \quad $U \leftarrow T $, \quad $V \leftarrow \id_{p \times p} $ 
\STATE  $i \leftarrow 0$ 
 \WHILE{$M^{(i)}$ is not a diagonal matrix { \bf or } $i\equiv_2 1$}
\STATE     $i \leftarrow i+1$
\STATE    Compute $U^{(i)}$ so that  $U^{(i)} \cdot M^{(i-1)} = \mathcal{G}({}_{R_*} M^{(i-1)}) \in R_*^{\ell \times p}$
\STATE     Select $\{t_1, \dots, t_p\} \subseteq \{1, \dots, \ell\}$ as in (\ref{indexChoice}) of Prop. \ref{trafoMatrix} 
\STATE     $ U^{(i)} \leftarrow [(U^{(i)})_{t_1}, \dots, (U^{(i)})_{t_p} ]^T$ 
\STATE     $M^{(i)} \leftarrow \widetilde{\theta}( G^*({}_R M) )$
              \IF{$i \equiv_2 0$}
                   \STATE  $V \leftarrow V \cdot \widetilde{\theta}(U^{(i)}) $
                    \ELSE \STATE $ U \leftarrow U^{(i)} \cdot U $
              \ENDIF
   \ENDWHILE
\RETURN $(U,V,M^{(i)})$
\end{algorithmic}
\end{algorithm}

\begin{remark}
\label{unimod}
It is important to mention, that the matrices $U,V,D$ (hence the elements $r_i$ as well) have entries from $R_*$, that is, they are polynomials. However, $U$ and $V$ are only unimodular over $R$ and, in general, they need not be unimodular over $R_*$ for obvious reasons. In  Subsection \ref{Examplez} we will investigate, over which subalgebras of $R$ the matrices $U$ or $V$ become unimodular.
\end{remark}

\begin{theorem}
 Algorithm \ref{diagonalPoly} terminates with the correct result.
\end{theorem}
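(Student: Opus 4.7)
The proof splits naturally into (a) correctness under the assumption that the loop exits, and (b) termination of the loop.

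For correctness, my plan is to maintain the loop invariant $U \cdot M \cdot V = M^{(i)}$ with $U,V \in R^{p \times p}$ both $R$-unimodular and, at exit, $M^{(i)}$ diagonal with entries in $R_*$. Lemma \ref{inMod} establishes the invariant after initialization, since $T$ is diagonal with nonzero entries from $A_*$ and hence $R$-unimodular, and $M^{(0)} = TM \in R_*^{p \times p}$. In an odd iteration, Proposition \ref{trafoMatrix} supplies an $R$-unimodular $U^{(i)} \in R_*^{p \times p}$ with $U^{(i)} M^{(i-1)} = \G^*({}_R M^{(i-1)})$, so the update $U \leftarrow U^{(i)} \cdot U$ preserves the invariant, while setting $M^{(i)} := \widetilde{\theta}(\G^*({}_R M^{(i-1)}))$ is merely bookkeeping so that the next row-Gröbner computation acts as a column reduction on the original matrix. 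In an even iteration, the same Gröbner-basis step on $M^{(i-1)}$ transforms, via the involutive character of $\widetilde{\theta}$, into a right multiplication by $\widetilde{\theta}(U^{(i)})$; this is again $R$-unimodular because $\theta(a b) = \theta(b)\theta(a)$ turns a left inverse into a right inverse. The exit condition ($M^{(i)}$ diagonal and $i$ even) guarantees that the final step was a column one, so returning $(U,V,D := M^{(i)})$ meets the output specification.

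For termination, I would track the tuple $d^{(i)} := (\deg(M^{(i)}_{11}),\ldots,\deg(M^{(i)}_{pp})) \in \N^p$ of diagonal degrees. By Lemma \ref{triangular} each $\G^*({}_R M^{(i-1)})$ is lower triangular, so $M^{(i)} = \widetilde{\theta}(\G^*({}_R M^{(i-1)}))$ is upper triangular and its diagonal entries are the $\theta$-images of the diagonal of $\G^*$. Corollary \ref{DegreeInv} identifies $\deg(\G^*({}_R M^{(i-1)})_{kk})$ with the intrinsic invariant $\min\{\deg(a) \mid a \in {}_R M^{(i-1)} \setminus \{0\},\ \lpos(a) = k\}$; since $\theta$ preserves the weighted degree, this invariant governs $d^{(i)}_k$ as well. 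Because $R$-unimodular row or column transformations cannot decrease these minimal degrees below their module-theoretic value, and Gröbner reduction cannot increase them, the sequence $d^{(i)}$ is monotonically non-increasing (say, lexicographically) in the well-ordered set $\N^p$, hence it stabilizes after finitely many double-iterations.

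The main obstacle is to argue that once $d^{(i)}$ has stabilized the loop actually exits, i.e.\ that $M^{(i)}$ becomes diagonal rather than merely remaining triangular. Here I would invoke the reasoning of Algorithm 3.5 of \citep{LS10}: if $M^{(i)}$ were upper (respectively lower) triangular with a nonzero strictly off-diagonal entry after stabilization, then the subsequent row-reduction would express that entry modulo the minimal-degree diagonal generator of the appropriate column, producing either a smaller diagonal degree (contradicting stability) or a zero entry. The technical leverage is exactly Corollary \ref{DegreeInv}, which guarantees that the fraction-free computation in $R_*$ reproduces the leading-degree behavior of the reduced Gröbner basis over $R$. Thus Algorithm \ref{diagonalPoly} executes, step by step, the same diagonalization schedule as the rational algorithm of \citep{LS10}, only with matrices transported back into $R_*$ via Lemma \ref{inMod}; termination therefore reduces to the already-established termination of that rational algorithm, completing the proof.
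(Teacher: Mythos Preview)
Your correctness argument is sound and matches the paper's: the loop invariant $UMV$ equals the current working matrix, Lemma~\ref{inMod} handles initialization, and Proposition~\ref{trafoMatrix} certifies that each selected $U^{(i)}$ is $R$-unimodular.

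The termination argument, however, has a genuine gap. You claim the tuple $d^{(i)}$ of diagonal degrees is (lexicographically) non-increasing because ``$R$-unimodular row or column transformations cannot decrease these minimal degrees below their module-theoretic value.'' But the minimal degree at position $k$ is an invariant of the \emph{row submodule} ${}_R M^{(i-1)}\subset R^{1\times p}$, and this submodule changes under right multiplication by a unimodular matrix. Concretely, passing from $M^{(2j-2)}$ to $M^{(2j)}$ involves a right-unimodular factor $\widetilde{\theta}(U^{(2j)})$, and a column operation can raise the minimal degree at one position while lowering it at another (already a column swap on a $2\times 2$ triangular matrix shows this). Corollary~\ref{DegreeInv} only says that, for a \emph{fixed} module, the $R_*$- and $R$-computations yield the same minimal degrees; it says nothing about how those minima evolve across the $\widetilde{\theta}$ step.

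Your final reduction to the termination of the rational Algorithm~3.5 of \cite{LS10} is the right strategy and is exactly what the paper does, but the bridge you cite (Corollary~\ref{DegreeInv}) is not the one the paper uses. Since $\G^*({}_R M)$ and $\mathcal{G}({}_R M)$ differ by a left-unimodular factor, after applying $\widetilde{\theta}$ they differ by a \emph{right}-unimodular factor, so the fraction-free and rational algorithms do \emph{not} in general produce the same sequence of row modules; ``the same diagonalization schedule'' is therefore not literally true. The paper's proof instead invokes Lemma~\ref{columnGeneratorPoly}, which shows that despite this discrepancy the principal ideal generated by the $(p,p)$ entry after a double iteration agrees with the one produced in \cite{LS10}; this is what replaces Lemma~3.4 of \cite{LS10} and makes the inductive termination argument there go through. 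You should route the termination through Lemma~\ref{columnGeneratorPoly} rather than through a global monotonicity of $d^{(i)}$.
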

\begin{proof}
The proof is a natural generalization of the proof of the
Theorem 3.6. from \citep{LS10}. At first, we 
use Proposition \ref{trafoMatrix}. Moreover, 
Lemma \ref{columnGeneratorPoly} provides a replacement for
the arguments we used in the Lemma 3.4. of \citep{LS10}. 
\end{proof}

Algorithm \ref{diagonalPoly}, as well as the original algorithm of \citep{LS10}, can be extended to $M \in R^{p \times q}$ along the lines already 
discussed in Remark 3.7
of \citep{LS10}. Our implementation (cf. Section \ref{Impl}) works for arbitrary matrices. 

As for examples, a $2\times 2$ matrix over the Weyl algebra has been considered in detail in Example 3.8 of \citep{LS10}. Note that a fraction-free method was used indeed.

\begin{example}\label{runningEx3}
Consider the first shift algebra $R_* = S_1 = \K\langle x,s \mid sx=xs+s \rangle$
and its localization (often called the first rational shift algebra) 
$R =\K(x)\langle s \mid sx=xs+s \rangle$. There are precisely two involutions, which can be presented by diagonal matrices on $R_*$, namely $x \mapsto -x, s\mapsto -s$ and $x \mapsto -x, s\mapsto s$. Let us take the latter and call it $\theta$.
Consider the matrix in $R_*^{2\times 3}$
\[
M = 
\left[
\begin{array}{*{5}{c}}
(x-1)s+x^{2}-x &  & xs+x^{2} &  & (x+2)s+x^{2}+2x \\
s+x & & 0 &  & s
\end{array}
\right].
\]

\noindent
As we can see, $T=\id_{2 \times 2}$ and thus $M^{(0)}:=M,$ $U=\id_{2 \times 2}$, $V=\id_{3 \times 3}$ and $i=0$. \\
\noindent
Since $M$ is not a square matrix, under the \textit{while} condition ``while $M^{(i)}$ is not a diagonal matrix'' in the Algorithm we mean the following. 
The computation will run until the matrix we obtain contains a diagonal square submatrix and the entries outside of this submatrix are zero.

{\bf 1:} Since $M^{(0)}$ is not diagonal, we enter the while loop. $i:=1$.

\[
M' := \mathcal{G}({}_{R_*}M^{(0)}) =
\begin{footnotesize}
\left[
\begin{array}{*{3}{c}}
-3s^{2}-(x^{2}+7x+6)s-x^{3}-4x^{2}-3x & (x+1)s^{2}+(x^{2}+2x+1)s & 0 \\
-3s-3x & xs+x^{2} & x^{2}+2x
\end{array}
\right]
\end{footnotesize}
\]

We set $U:=U_1$, where $U_1 M^{(0)} = M'$ and 
\begin{footnotesize}
\[
U_1 = 
\left[
\begin{array}{*{3}{c}}
s & -(x+3)s-x2-4x-3 \\
1& -x-2
\end{array}
\right].
\]
\end{footnotesize}

Moreover, $M^{(1)}:=\widetilde{\theta}(M')\in R_*^{3\times 2}$.


{\bf 2:} Since $M^{(1)}$ is not diagonal, we enter the while loop. $i:=2$.


\[
M^{(1)} =
\begin{footnotesize}
\left[
\begin{array}{*{3}{c}}
-3s^{2}-x^{2}s+5xs+x^{3}-4x^{2}+3x & -3s+3x \\
-xs^{2}-s^{2}+x^{2}s & -xs-s+x^{2} \\
0 & x^{2}-2x
\end{array}
\right]
\end{footnotesize}
\]

\[
M' := \mathcal{G}({}_{R_*}M^{(1)}) =
\begin{footnotesize}
\left[
\begin{array}{*{2}{c}}
0 & 0 \\
4x^{4}+12x^{3}-4x^{2}-12x & 0 \\
 -4xs-4s & -4x 
\end{array}
\right]
\end{footnotesize}
\]

The transformation matrix $U_2\in R_*^{3\times 3}$ is dense,
so we show its highest terms with respect to $s$:

\begin{footnotesize}
\[
U_2 = 
\left[
\begin{array}{*{3}{c}}
-(x^{2}+5x-6)s^{2} + \ldots \ & -7(x-1)s^{2}+\ldots \ & (-x+1)s^{2}+\ldots \\
-3(x+6)s^{2}+\ldots \ & -21s^{2}+\ldots \  & -3s^{2}+\ldots \ \\
(x^{2}+5x-6)s^{2}+\ldots \ & 7(x-1)s^{2}+\ldots \ & (x-1)s^{2}+\ldots 
\end{array}
\right]
\]
\end{footnotesize}

Moreover, we put $M^{(2)}:=\widetilde{\theta}(M')\in R_*^{2\times 3}$ and 
$V:=\widetilde{\theta}(U_2)$.

{\bf 3:} Since $M^{(2)}$ is not diagonal, we enter the while loop. $i:=3$.

\[
M^{(2)} = 
\left[
\begin{array}{*{3}{c}}
0 & 4x^{4}+12x^{3}-4x^{2}-12x & -4xs-4s \\
0 & 0 & -4x
\end{array}
\right]
\]

\[
M' := \mathcal{G}({}_{R_*}M^{(2)}) =
\begin{footnotesize}
\left[
\begin{array}{*{3}{c}}
0 & 4(x^{4}+3x^{3}-x^{2}-3x) & 0 \\
0 & 0 & 4x
\end{array}
\right]
= U_3 M^{(2)}, \text{ where  }
U_3 = 
\left[
\begin{array}{*{2}{c}}
1 & -s \\
0 & -1
\end{array}
\right].
\end{footnotesize}
\]

Thus we define $M^{(3)}:=\widetilde{\theta}(M')\in R_*^{2\times 3}$ and 
$U:=U_3 \cdot U$.

\textbf{4:}. Since $M^{(3)}$ is diagonal (that is, consists of a diagonal 
submatrix and the rest of entries are zeros) but $i = 1 \mod 2$, we do one
more run, which finishes and returns the final data:

\[
D = \left[
\begin{array}{*{3}{c}}
0 & (x-1) x (x+1) (x+3)  & 0 \\
0 & 0 & x
\end{array}
\right], \;
U = \frac{1}{4}
\left[
\begin{array}{*{3}{c}}
0 & -(x+1)(x+3) \\
1 & -(x+2)
\end{array}
\right],
V = 
\]

\[
\left[
\begin{footnotesize}
\begin{array}{lll}
-(x^{2}+5x-6)s^{2}-(x^{3}+4x^{2}-x-4)s & -7(x-1)s^{2}-(3x^{2}+5x-8)s-4x+4 & v_{13}
\\
-3(x+6)s^{2}-(x^{3}+7x^{2}+5x-13)s-x^{4}-2x^{3}+7x^{2}+8x-12 & 
-21s^{2}-(7x^{2}+2x-11)s-3x^{3}-2x^{2}+13x-8 & v_{23} \\
(x^{2}+5x-6)s^{2}+(2x^{3}+6x^{2}-14x+6)s+x^{4}-7x^{2}+6x & 7(x-1)s^{2}+(10x^{2}-16x+6)s+3x^{3}-4x^{2}-5x+6 & v_{33}
\end{array}
\end{footnotesize}
\right],
\]
where \begin{footnotesize}
$v_{13} =(x-1)s^{2}+(x^{2}-x)s$, $v_{23} =  3s^{2}+(x^{2}+2x-5)s+x^{3}-2x^{2}-3x+8$\end{footnotesize} 
and
\begin{footnotesize} 
$v_{33} = (-x+1)s^{2}-2(x-1)^{2}s-x^{3}+4x^{2}-5x+2$
\end{footnotesize}.
Indeed, since both nonzero diagonal entries are units in $K(x)$, the output matrix can be further reduced to
\[
D' = \left[
\begin{array}{*{3}{c}}
0 & 1 & 0 \\
0 & 0 & 1
\end{array}
\right].
\]
However, then one has to divide explicitly by polynomials in $x$ in transformation matrices. We are not going to do this. Moreover, in what follows we will show, how to get important information from such matrices containing units from the non-constant ground field. As for the concrete example, we conclude, that $R^3/R^2 M \cong R$, thus a system module of $M$ is free of rank 1 over $R$.
\end{example}

\section{Solving Systems of Operator Equations}
\label{Solving}

\begin{remark}
\label{Malgrange}
Let us settle the terminology. Let $A$ be the $\K$-algebra of $\K$-linear \textit{operators}.
Consider a system of equations in unknown functions $\omega_1,\ldots,\omega_m$. A system is linear, if
it can be written in the matrix notation, that is
$S \cdot [\omega_1,\ldots,\omega_m]^T = 0$, where
$S$ is a rectangular matrix with entries from $A$. Then one
associates to $S$ a left $A$-module $\cal{M}$, which is finitely presented 
by the matrix $S$. Given a left $A$-module $\cal{F}$, we usually speak
of \textit{solutions of} $\cal{M}$ in $\cal{F}$. 
The celebrated Lemma of B.~Malgrange tells us, that the solutions to a linear system of equations $S$ in a left $A$-module $\cal{F}$ are in one-to-one correspondence with the elements of the abelian group $\Hom_A({\cal M}, {\cal F})$.
This allows us to avoid the reference to a specific solution space by
adressing an abstract one. However, in the context of this paper
we address 
the space of distributions (though yet more general hyperfunctions fit into our framework as well) as the space of more general solutions in addition to meromorphic functions.

Assume there is a bigger function space ${\cal G}$, which is an $A$-module. 
Thus we have an exact sequence $0 \to {\cal F} \to {\cal G}$. Due to the left exactness of the $\Hom$ functor, we obtain that $0 \to \Hom_A({\cal M}, {\cal F}) \to \Hom_A({\cal M}, {\cal G})$ is an exact sequence as well. This justifies the fact, that working with a more general solution space ${\cal G}$ we obtain not less solutions to a module ${\cal M}$ as with ${\cal F}$.
\end{remark}

An $R$-module is naturally an $R_*$-module, but an $R_*$-module is not necessarily an $R$-module. There can be $R_*$-modules $M$ with $S$-torsion, that is those for which $S^{-1} M = 0$ holds. 
Hence, though working over $R$ brings significant comfort, we are interested
in gaining more information from the algebraic structure by looking at 
$R_*$ and, more generally, subalgebras between $R_*$ and $R$. Since there
are $R_*$-modules, which are not $R$-modules, in view of Malgrange's Lemma
there might be more solutions in $R_*$-modules as in $R$-modules. See 
Examples \ref{EulerCont} and \ref{EulerDiscrete} for illustrative details.

We need to recall and develop mechanisms, which allow us to tackle 
localizations of operator algebras. We will show, how different small
enough localizations look for common operator algebras.  

\subsection{Ore Multiplicative Closure}
\label{MCOre}
Let $W \in R^{r\times r}$ be a square matrix with entries in $R$. Assume that
there exists left inverse matrix $T$, that is $TW=\id_{r\times r}$. Then by
Lemma \ref{inMod} there exists 
a diagonal matrix $Q=\Diag(\ldots,q_{ii},\ldots)$ such that $Q$ (resp. $QT$) has entries from $A_*$ (resp. $R_*$). Let $\Omega = \{ q_{ii} \mid 1\leq i \leq r$, $q_{ii} \not\in\K\} \cup \{1\}$. Denote by $R_W$ the Ore localization of $R_*$ with respect to a multiplicatively closed Ore set $S_W$, which is defined as follows. 
Let $\mathcal{M}(\Omega)$ be the two-sided multiplicative closure of $\Omega$, equivalently the (possibly non-commutative) monoid, generated by a
finite set $\Omega\subset R_*\setminus\{0\}$. Let $S_W$ be an \textit{Ore closure} of $\mathcal{M}(\Omega)$ in $R_*$, that is a set, containing $\mathcal{M}(\Omega)$, which is an Ore set in $R_*$. Such a closure always exists since $R$ is 
a localization of $R_*$ with respect to $S=A_*\setminus\{0\}$, but we are interested in computing a closure, which is minimal in the sense that there
exists no closure $S'$ satisfying $\mathcal{M}(\Omega) \subsetneq S' \subsetneq S_W$.

\subsection{Ore Closure in Classical Algebras}


In order to localize a 
domain $R$ with respect to a multiplicatively closed set $S$, the latter needs to be a (left and right) Ore set in $R$. Thus, we
are going to show, how to compute an Ore closure of $S$ in $R$. Moreover,
we ask for small generating sets of a monoidal localization.

Let us recall the well-known Lemma (see e.~g. \cite{ZS}) first.
\begin{lemma}
Let $f\in \K[x]:=\K[x_1,\ldots,x_n]$ be a non-constant monic polynomial and $S =\{f^i \mid i\in\N_0 \}$. 
Moreover, let $f = f_1^{d_1} \cdots f_r^{d_r}$ be an irreducible factorization in $\K[x]$
and $T := \{ f_1^{\alpha_1}, \ldots, f_r^{\alpha_r} \mid \alpha \in\N_0^r \}$.
Then $S^{-1}\K[x] = T^{-1}\K[x]$.
\end{lemma}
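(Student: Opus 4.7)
The plan is to verify the equality of the two localizations via the universal property of localization: for a commutative domain $R$ and multiplicative sets $S_1, S_2$, one has $S_1^{-1}R = S_2^{-1}R$ as subrings of $\Quot(R)$ if and only if every element of $S_1$ becomes a unit in $S_2^{-1}R$ and vice versa. Since $\K[x]$ is commutative, there are no Ore issues to address.

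First I would establish the inclusion $S^{-1}\K[x] \subseteq T^{-1}\K[x]$. Setting $\alpha=(d_1,\dots,d_r)\in\N_0^r$ gives $f=f_1^{d_1}\cdots f_r^{d_r}\in T$ directly from the factorization. Since $T$ is multiplicatively closed (it is exactly the monoid generated by $\{f_1,\dots,f_r\}$), every power $f^i$ also lies in $T$, so $S\subseteq T$. The universal property then yields the first inclusion.

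For the reverse inclusion $T^{-1}\K[x]\subseteq S^{-1}\K[x]$, it suffices to check that each generator $f_i$ is a unit in $S^{-1}\K[x]$. Here is the key observation: the quotient $g_i:=f/f_i^{d_i}=f_1^{d_1}\cdots \widehat{f_i^{d_i}}\cdots f_r^{d_r}$ is a genuine element of $\K[x]$, so
\[
\frac{1}{f_i^{d_i}} = \frac{g_i}{f} \in S^{-1}\K[x].
\]
Hence $f_i^{d_i}$ is a unit in $S^{-1}\K[x]$; but in a commutative ring, if $a^n$ is a unit then so is $a$ (its inverse is $a^{n-1}\cdot(a^n)^{-1}$). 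Thus each $f_i$, and therefore every monomial $f_1^{\alpha_1}\cdots f_r^{\alpha_r}\in T$, is invertible in $S^{-1}\K[x]$. The universal property then delivers the second inclusion, completing the equality.

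The proof is essentially routine; the only step requiring a brief comment is the passage from the invertibility of $f_i^{d_i}$ to the invertibility of $f_i$, which is immediate in the commutative setting but would be the kind of point to state explicitly.
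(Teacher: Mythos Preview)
Your argument is correct and is precisely the standard proof one finds for this fact. The paper itself does not give a proof of this lemma at all: it is introduced as ``the well-known Lemma (see e.g.\ \cite{ZS})'' and immediately followed by an example, so there is no paper-proof to compare against.

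One small notational remark: as written in the paper (and confirmed by the example following the lemma), $T$ is the set $\{f_i^{\alpha_i}\mid 1\le i\le r,\ \alpha_i\in\N_0\}$, i.e.\ the union of the cyclic monoids generated by each $f_i$, not their product monoid. So $f=f_1^{d_1}\cdots f_r^{d_r}$ need not literally lie in $T$ when $r>1$. This does not affect your proof, since localization depends only on the saturation (equivalently, on which elements become units), and each $f_i\in T$ already forces $f$ to be a unit in $T^{-1}\K[x]$; you may want to phrase the first inclusion in those terms rather than via $S\subseteq T$.
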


\noindent
Thus, if $f=(x+1)^2y^3$, then $S=\{ (x+1)^{2i} y^{3i} \mid i\in\N_0\}$ and $T=\{ (x+1)^i,y^i \mid i\in\N_0\}$. \\

Let $S$ be a multiplicatively closed subset of a domain $R$ and $0\not\in S$. In order to prove, that $S$ is an Ore set in $R$, one has to show, that $\forall \ (s,r) \in S \times R$, there exist $(t,q) \in S \times R$,
such that $r \cdot t =  s \cdot q$ in $R$ 
and $\forall (t,q) \in S \times R$ there exist $(s,r) \in S \times R$, satisfying the same condition. In other words, one can rewrite any left (resp. right) fraction as a right (resp. left) fraction.

We will analyze the smallest nontrivial Ore sets in the first Weyl, shift and $q$-commutative algebras (these results seem to be folklore) and give simple constructive proofs of the Ore property for them.

\begin{lemma}\label{OreWeyl}
 Let $A_1$ be the first Weyl algebra and $f \in\K[x]\setminus\K$.
Then $S =\{f^i \mid i\in\N_0 \}$ is an Ore set in $A_1$.
\end{lemma}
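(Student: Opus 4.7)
The plan is to first establish the right Ore condition for $S$ in $A_1$, and then to deduce the left Ore condition from it via the standard involution $\theta$ of $A_1$ given by $\theta(x) = x$, $\theta(\partial) = -\partial$. Observe that $S$ is multiplicatively closed, $0 \notin S$ (since $A_1$ is a domain and $f \neq 0$), and $\theta(f) = f$, so $\theta(S) = S$. Applying $\theta$ to any right Ore relation $rt = sq$ produces the left Ore relation $\theta(t)\theta(r) = \theta(q)\theta(s)$; bijectivity of $\theta$ on $A_1$ and on $S$ then transfers the left Ore property from the right one.

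For the right Ore condition, given $r \in A_1$ and $s = f^n \in S$ I would produce $m \geq 0$ and $q \in A_1$ with $r f^m = f^n q$ by induction on $n$, the case $n=0$ being trivial. The heart of the proof is the base case $n = 1$: writing $r = \sum_{i=0}^d a_i(x)\, \partial^i$ with $a_i \in \K[x]$, and using that each $a_i$ commutes with $f^m$, it suffices to show $\partial^i f^m \in f\cdot A_1$ for every $i \leq d$ and $m$ large enough. Here I would invoke the identity $\partial^i g = \sum_{j=0}^i \binom{i}{j} g^{(j)} \partial^{i-j}$ for $g \in \K[x]$ (a quick induction from $\partial x = x \partial + 1$), apply it to $g = f^m$, and observe that the iterated product rule yields $f^{m-j} \mid (f^m)^{(j)}$ whenever $j \leq m$. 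Choosing $m := d+1$ then makes every coefficient of the resulting expression divisible by $f$ on the left.

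The inductive step $n \to n+1$ is immediate by composition: pick $m_1$ with $r f^{m_1} = f r'$ by the base case, and by the inductive hypothesis applied to $r'$ pick $m_2$ with $r' f^{m_2} = f^n r''$; then $r f^{m_1 + m_2} = f^{n+1} r''$, as desired.

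The only genuinely computational step — and thus the main obstacle — is the divisibility $f \mid (f^m)^{(j)}$ of the base case; everything else (the induction on $n$ and the $\theta$-argument) is essentially formal. The same template then transfers to the shift and $q$-commutative cases alluded to in the surrounding discussion, by substituting the corresponding commutation identity and involution.
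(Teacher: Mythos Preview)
Your proof is correct and rests on the same computational core as the paper's: both arguments show $r\,f^{m}\in f^{k}A_1$ by exploiting that the $j$-th derivative of a high power $f^{m}$ retains a factor $f^{m-j}$. The paper packages this slightly differently---it records the closed formula $\partial^{j}f^{i+1}=f^{i-j+1}(f^{j}\partial^{j}+v_{ij})$ and then directly takes $m=d+k$ for arbitrary $k$, avoiding your induction on $n$; your Leibniz-rule formulation plus induction yields the same conclusion with a somewhat larger $m$. One point where your write-up is actually more complete: the paper's proof only exhibits the right Ore relation and leaves the left condition implicit, whereas you spell out the passage via the involution $\theta(x)=x$, $\theta(\partial)=-\partial$ fixing $S$.
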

\begin{proof}
We have for $f^{i+1}$ and $i\in\N$, that $\d \cdot f^{i+1} = f^{i} \cdot (f \d + (i+1) \tfrac{\d f}{\d x})$.
By induction, one can prove, that for $j\in\N$ and $i+1 \geq j$ one has
$\d^j \cdot f^{i+1} = f^{i-j+1} \cdot (f^j \d^j + v_{ij})$, 
where the terms of $v_{ij}\in A_1$ have degree at most $j-1$ and contain derivatives up to $f^{(j)}$.
Suppose we are given $g = \sum_{j=0}^d b_j(x) \d^j \in A_1$ with $b_d\not=0$ and $f^k$ for a fixed $k\in\N$. Then
\[
\d^j \cdot f^{d+k} = \d^j \cdot f^{j+k} \cdot f^{d-j} = 
f^{k}(f^j \d^j + v_{j+k,j}) f^{d-j}
\]
Thus
\[
g \cdot f^{d+k} = \sum_{j=0}^d b_j(x) \d^j \cdot f^{d+k} = 
f^{k}
\cdot \sum_{j=0}^d b_j(x) (f^j \d^j + v_{j+k,j}) f^{d-j}.
\]
\end{proof}

Consider now the first shift algebra $S_1$ (cf. Example \ref{runningEx3}). Since $sx = (x+1)s$, we see that for all $z\in\Z$
\[
(x+z+1)^{-1} s = s (x+z)^{-1} 
\]
thus it seems natural to have polynomials with integer shifts of their argument in the set $S$ as above in addition to $S$ itself.

\begin{remark}
For $f\in K[x]\setminus\K$, the set $S=\{f^i \mid i\in\N\}$ is not an Ore set in the first shift algebra.
Take $s$ and $f^k(x)\in S$, we're looking for $f^\ell(x)$ and $t\in S_1$, such that $s f^{\ell}(x) = f^k(x) t$. The left hand side is $f(x+1)^{\ell} s$ thus $f^k(x) t = f(x+1)^{\ell} s$. But $f(x) \nmid f(x+1)$ for non-constant $f$, thus there exists no $t\in S_1$ satisfying the latter identity. It means, that we have to enlarge $S$ in order to obtain an Ore set in $S_1$.
\end{remark}

\begin{lemma} \label{OreShift}
Let $S_1$ be the first shift algebra (cf. Example \ref{runningEx3}) and $f \in\K[x]\setminus\K$.
Then $S=\{ f^n(x\pm z) \mid n,z\in\N_0\}$ is an Ore set in $S_1$.
\end{lemma}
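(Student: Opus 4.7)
The plan is to verify both Ore conditions directly by explicit construction, exploiting only the single commutation rule $s^{j}\cdot p(x) = p(x+j)\, s^{j}$, valid for every $p\in \K[x]$ and $j\in\N_{0}$. This is exactly why (as the remark preceding the lemma already indicates) the set $S$ must contain all integer shifts of $f$. A straightforward induction on the number of factors reduces the problem to verifying the two Ore conditions when $a\in S$ is atomic, i.e.\ $a = f^{n}(x+z)$ for some $n\in\N_{0}$, $z\in\Z$.

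For the left Ore condition, given such an $a$ and an arbitrary $b = \sum_{j=0}^{d} b_{j}(x)\, s^{j}\in S_{1}$, I would set
\[
a' \;:=\; \prod_{j=0}^{d} f^{n}(x+z-j)\;\in\; S.
\]
Left-multiplying by $s^{j}$ shifts every factor's argument by $+j$, and the $k=j$ factor then becomes precisely $f^{n}(x+z)=a$; thus $s^{j} a' = a\cdot c_{j}\cdot s^{j}$ with $c_{j} := \prod_{k\ne j} f^{n}(x+z-k+j)\in\K[x]$. Since $a$ commutes with each coefficient $b_{j}(x)$ in the commutative subring $\K[x]$, summing over $j$ yields $b\cdot a' = a\cdot b'$, where $b' := \sum_{j} b_{j}(x)\, c_{j}\, s^{j}\in S_{1}$.

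The right Ore condition is handled symmetrically by taking $a'' := \prod_{j=0}^{d} f^{n}(x+z+j)\in S$. Using $s^{j}\cdot a = f^{n}(x+z+j)\, s^{j}$ together with the divisibility $f^{n}(x+z+j)\mid a''$ in $\K[x]$, each summand $a''\, b_{j}(x)\, s^{j}$ rewrites as $\bigl(b_{j}(x)\prod_{k\ne j} f^{n}(x+z+k)\bigr)\, s^{j}\, a$; these assemble into an identity $a''\cdot b = b''\cdot a$ with $b''\in S_{1}$. No step presents a genuine obstacle; the only point requiring care is to recognise that $S$ has been defined precisely so as to be invariant under the $\Z$-action $x\mapsto x+j$ induced by $s^{j}$, which is exactly what guarantees that the common multiples $a'$ and $a''$ lie back in $S$ rather than in some larger localization of $\K[x]$.
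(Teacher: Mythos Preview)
Your argument is correct and follows essentially the same construction as the paper: given an atomic $a=f^{n}(x+z)$ and $b=\sum_{j}b_{j}(x)s^{j}$, the paper also takes the product $\prod_{i=0}^{d}a(x-i)$ as the common right multiple and uses $s^{j}p(x)=p(x+j)s^{j}$ to extract the factor $a$. You are in fact slightly more thorough than the paper, which only writes out one Ore condition and does not mention the reduction from general elements of $S$ to atomic generators; your explicit treatment of the right Ore condition via $a''=\prod_{j}f^{n}(x+z+j)$ and the induction remark fill small gaps that the paper leaves implicit.
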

\begin{proof}

Given $g = \sum_{j=0}^d b_j(x) s^j \in S_1$ with $b_d\not=0$ and $h(x) = f^k(x+z_0) \in S$ with $k\in\N, z_0\in\Z$, let us define 
$g_f(x) := \prod_{i=0}^d h(x-i)\in S$. Then

\[
g \cdot g_f(x) = 
\sum_{j=0}^d b_j(x) s^j \cdot \prod_{i=0}^d h(x-i) = 
h(x)^d  
\cdot \sum_{j=0}^d b_j(x) \bigl(\prod_{i=0, i\not=j}^d h(x+j-i) s^j \bigr).
\]
\end{proof}

A similar phenomenon can be observed in quantum algebras as well.

\begin{lemma} Let $Q_1$ be the first $q$-commutative algebra $\K(q) \langle x,s \mid y x = qx y \rangle$ and $f \in\K[x]\setminus\K$.
Then $S=\{ f^n(q^{\pm z}x) \mid n,z\in\N_0\}$ is an Ore set in $Q_1$.
\end{lemma}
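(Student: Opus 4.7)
The plan is to mimic verbatim the strategy used in Lemma \ref{OreShift} for the shift algebra, replacing the additive shift $x \mapsto x-i$ with the multiplicative $q$-shift $x \mapsto q^{-i}x$. First I would establish the basic commutation rule in $Q_1$: starting from $yx = qxy$, an immediate induction on $j$ gives $y^j \cdot g(x) = g(q^j x) \cdot y^j$ for every $g(x) \in \K[x]$ and every $j \in \N_0$. Note that this rule preserves the set $S$, since shifting $q^z x \mapsto q^{j+z} x$ keeps the element in the form $f^n(q^{\pm z'} x)$.

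Next, given $g = \sum_{j=0}^d b_j(x) y^j \in Q_1$ with $b_d \neq 0$, and a fixed $h(x) = f^k(q^{z_0} x) \in S$, I would introduce the compensating element
\[
g_f(x) := \prod_{i=0}^d h(q^{-i} x) = \prod_{i=0}^d f^k(q^{z_0-i} x),
\]
which lies in the multiplicative monoid generated by $S$ (each factor individually belongs to $S$). Using the commutation rule factor-by-factor, for every $j \in \{0, \dots, d\}$ one has
\[
y^j \cdot g_f(x) = \prod_{i=0}^d h(q^{j-i} x) \cdot y^j = h(x) \cdot \prod_{\substack{i=0 \\ i \neq j}}^d h(q^{j-i} x) \cdot y^j,
\]
since the $i = j$ factor of the product is exactly $h(q^0 x) = h(x)$.

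Because $h(x)$ and the coefficients $b_j(x)$ all lie in the commutative subring $\K[x]$, they commute freely and I can collect $h(x)$ on the left:
\[
g \cdot g_f(x) = h(x) \cdot \sum_{j=0}^d b_j(x) \Bigl( \prod_{\substack{i=0 \\ i \neq j}}^d h(q^{j-i} x) \Bigr) y^j.
\]
This exhibits the required identity $g \cdot g_f = h \cdot t$ with $g_f$ in the monoid closure of $S$, proving the (right) Ore condition; the left Ore condition is obtained symmetrically by replacing $q$-shifts with their inverses. I do not anticipate any real obstacle: the only subtle point is the bookkeeping ensuring that each $q^{-i}$-shift of $h$ remains in $S$ (which is exactly why the definition of $S$ must permit shifts $q^{\pm z}$ in both directions), and that $S$ should be understood up to its multiplicative closure, in parallel with the passage from $\{f^i\}$ to $\{f_1^{\alpha_1}\cdots f_r^{\alpha_r}\}$ in the commutative lemma preceding Lemma \ref{OreWeyl}.
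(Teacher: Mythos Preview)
Your proposal is correct and follows essentially the same argument as the paper: establish the commutation rule $y^j g(x) = g(q^j x) y^j$, define $g_f(x) = \prod_{i=0}^d h(q^{-i}x)$, and factor out $h(x)$ from $g\cdot g_f(x)$ after shifting. Your write-up is in fact slightly more careful than the paper's, since you make explicit the commutativity of $h(x)$ with the $b_j(x)$, address the left Ore condition, and flag that $g_f$ lies in the multiplicative closure of $S$.
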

\begin{proof}
Note, that for any $g(x)\in\K[x]$ one has $y^m g(x) = g(q^m x) y$. 
Suppose we are given $g = \sum_{j=0}^d b_j(x) y^j \in Q_1$ and 
$h(x) = f^k(q^{\ell}x) \in S_1$. Let us define 
$g_f(x) := \prod_{i=0}^d h(q^{-i}x)\in S$. Then
\[
g \cdot g_f(x) = 
\sum_{j=0}^d b_j(x) y^j \cdot \prod_{i=0}^d h(q^{-i}x) = 
h(x) 
\cdot \sum_{j=0}^d b_j(x) \prod_{i=0, i\not=j}^d h(q^{j-i}x) y^j.
\]
\end{proof}

Let $R'$ be a $\K$-algebra with $R_* \subseteq R' \subseteq R$, where $R_*,R$ are as above and $M\in R^{m\times m}$. 
Consider a system of equations $M \omega = 0$ in unknown functions $\omega = (\omega_1,\ldots,\omega_m)$ from a space of functions $\kf$, which will possess some module structure, see below.
Assume, that we have computed $U,V$ (unimodular over $R$) and $D=\Diag(d_{11},\ldots,d_{mm})$ satisfying $UMV=D$.

\subsection{$\kf$ is an $R$-module}

Inverting $V$, we obtain $UM = DV^{-1}$, hence $M\omega=0$ is equivalent to $0=UM\omega=DV^{-1}\omega$. Thus, introducing an $R$-automorphism of $\kf$, defined by $\varpi := V^{-1} \omega$, we obtain a decoupled system $\{ d_{ii} \varpi_i = 0 \}$. Note that if $d_{ii}=0$, then $\varpi_i$ is called a free variable of the system, e.~g. in \citep{Eva05}. The solutions of the decoupled system in $\kf$ are precisely the solutions of $M \omega = 0$ in $\kf$.

\subsection{$\kf$ is an $R_*$-module}

Indeed, the $R$-automorphism of $\kf$ above can be defined as soon as $V$ is invertible. We can regard this as a kind of ``analytic'' transformation of $\kf$. 

\begin{proposition}
Let $UMV=D$ as before. Let $S_U$ and $S_V$ be Ore multiplicative closures of $U$ and $V$ respectively, according to Sect. \ref{MCOre}. Moreover, let $S$ be an Ore multiplicative closure of the monoid $S_V \cup S_U$. 
Then, over $S^{-1} R_* \subset R$ we have $M \omega=0 \Leftrightarrow D (V^{-1} \omega) = 0$. Thus it is possible to decouple the system $M$. Moreover, there might be solutions in an $S^{-1} R_*$-module ${\cal G}$.
\end{proposition}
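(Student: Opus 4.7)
The plan is to reduce the claim to the same kind of argument used in the $R$-module case, but carried out over the smaller ring $S^{-1}R_*$. The entire content of the proposition is that the localization $S$ is \emph{large enough} to invert $U$ and $V$ while still being \emph{small enough} to sit inside $R$, so that all manipulations make sense simultaneously in $S^{-1}R_*$ and in $R$.

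First I would unpack the definitions of $S_U$ and $S_V$ from Section~\ref{MCOre}. By construction, $S_U$ contains the diagonal entries of a left common denominator matrix $Q_U$ for $U^{-1}$, so that $Q_U U^{-1} \in R_*^{p \times p}$; consequently $U^{-1}$ has entries in $S_U^{-1} R_*$, and likewise $V^{-1}$ has entries in $S_V^{-1} R_*$. Since $S$ is an Ore closure of the monoid generated by $S_U \cup S_V$ in $R_*$, both $U^{-1}$ and $V^{-1}$ lie in $(S^{-1}R_*)^{p\times p}$. In particular, in the ring $S^{-1}R_*$ the matrices $U$ and $V$ are two-sided units, and the identity $UMV = D$ (which holds over $R$) is valid already over $S^{-1}R_* \subseteq R$.

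Next, assuming $\kf$ carries the structure of an $S^{-1}R_*$-module, I would justify the transformation $\varpi := V^{-1}\omega$: since every entry of $V^{-1}$ lies in $S^{-1}R_*$ it acts on $\kf$, so $\varpi$ is well-defined, and $V\varpi = \omega$ by construction. Now the equivalence is the standard chain
\begin{align*}
M\omega = 0 \;&\Longleftrightarrow\; U(M\omega) = 0 \;\text{(since $U$ is invertible over $S^{-1}R_*$)} \\
&\Longleftrightarrow\; (UMV)(V^{-1}\omega) = 0 \;\Longleftrightarrow\; D\varpi = 0,
\end{align*}
where the forward implication $M\omega=0 \Rightarrow UM\omega = 0$ is trivial and the reverse uses left-invertibility of $U$, and the second equivalence uses $V V^{-1} = \id$ as operators on $\kf$. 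Thus the original system is equivalent, over $S^{-1}R_*$, to the decoupled diagonal system $\{d_{ii}\varpi_i = 0\}$.

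The main obstacle I anticipate is purely bookkeeping: one has to verify that $S$ is genuinely an Ore set in $R_*$ and that the monoidal join of $S_U$ and $S_V$ admits such a closure inside $R$ (so that $S^{-1}R_* \subseteq R$ really holds and no new identifications are introduced). This is guaranteed by Section~\ref{MCOre}, since $R = S^{-1}R_*$ for $S = A_*\setminus\{0\}$ provides a common overring in which the Ore conditions can be checked. A minor subtlety is that the Ore equivalence $a^{-1}b = b' (a')^{-1}$ must be used consistently when acting on $\kf$, but since $\kf$ is assumed to be an $S^{-1}R_*$-module this is automatic. The final assertion about solutions in an $S^{-1}R_*$-module $\mathcal{G}$ then follows from Malgrange's Lemma (Remark~\ref{Malgrange}) applied to the inclusion ${\cal F} \hookrightarrow {\cal G}$ of solution spaces, since $\Hom_{S^{-1}R_*}(\mathcal{M},-)$ is left exact.
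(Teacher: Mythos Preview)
Your proposal is correct and follows essentially the same approach as the paper: both argue that $V$ is invertible over $S_V^{-1}R_*$ (giving the automorphism $\omega\mapsto V^{-1}\omega$ and hence $UM\omega=0\Leftrightarrow D\varpi=0$), that $U$ is invertible over $S_U^{-1}R_*$ (giving $M\omega=0\Leftrightarrow UM\omega=0$), and then combine these over $S^{-1}R_*$. Your write-up is in fact more detailed than the paper's, which omits the bookkeeping about Ore closures and the Malgrange-lemma justification for the final clause.
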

\begin{proof}
Let $R_V = (S_V)^{-1}R_*$. Then on an $R_V$-module $\kg$ we can define an $R_V$-automorphism $\omega \mapsto V^{-1} \omega$. Thus over $R_V$ we have $UM \omega=0 \Leftrightarrow D \varpi = 0$.
Moreover, $U$ is invertible over $(S_U)^{-1}R_*$.
Hence, over $S^{-1} R_*$ we have $M \omega=0 \Leftrightarrow UM \omega=0 \Leftrightarrow D \varpi = 0$.
\end{proof}

\begin{corollary}
With notations of the Proposition, there is an explicit isomorphism of $S^{-1} R$-modules
\[
(S^{-1}R)^{m} / (S^{-1}R)^n M  \cong (S^{-1}R)^{m} / (S^{-1}R)^n UMV = (S^{-1}R)^{m} /(S^{-1}R)^n  D .
\]
\end{corollary}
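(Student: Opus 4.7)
The plan is to build the isomorphism from the matrices $U$ and $V$ themselves, using the fact that the Ore multiplicative closure $S$ was tailored precisely so that $U$ and $V$ become unimodular over $S^{-1}R_{*}$ (and hence over any ring between $S^{-1}R_{*}$ and $R$). With $M$ viewed as a map of free row-modules, right multiplication by $V$ should descend to a well-defined map on cokernels.

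First I would set things up carefully. View the presentation matrix $M\in R^{m\times m}$ as inducing a left-$R$-linear map $(S^{-1}R)^{1\times m}\to(S^{-1}R)^{1\times m}$ by $v\mapsto vM$, so the quotient in question is the cokernel $\operatorname{coker}(M)=(S^{-1}R)^{1\times m}/(S^{-1}R)^{1\times m}M$. By construction of $S_U$ and $S_V$ in Section \ref{MCOre}, the diagonal denominator matrices produced by Lemma \ref{inMod} for $U^{-1}$ and $V^{-1}$ have entries in $S$, so $U^{-1},V^{-1}\in (S^{-1}R_{*})^{m\times m}\subset (S^{-1}R)^{m\times m}$. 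In particular $U$ and $V$ are units of $\mathrm{GL}_m(S^{-1}R)$.

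Next I would define $\varphi\colon\operatorname{coker}(M)\to\operatorname{coker}(D)$ by $\varphi([v])=[vV]$ and check its basic properties. It is left-$S^{-1}R$-linear because $V$ is multiplied on the right of the row vector while the module acts on the left. Well-definedness is the one identity to verify: if $v=wM$ for some $w\in(S^{-1}R)^{1\times m}$, then $vV=wMV=wU^{-1}(UMV)=wU^{-1}D\in(S^{-1}R)^{1\times m}D$. The inverse is $[u]\mapsto[uV^{-1}]$, well-defined by the symmetric argument: if $u=wD=wUMV$, then $uV^{-1}=wUM\in(S^{-1}R)^{1\times m}M$. Thus $\varphi$ is an isomorphism of left $S^{-1}R$-modules, which is precisely the claim, since $(S^{-1}R)^{1\times m}UMV=(S^{-1}R)^{1\times m}MV$ (because $U$ is a unit) equals $(S^{-1}R)^{1\times m}D$.

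The only delicate point, and the step that needs to be spelled out, is the verification that the Ore closure $S$ actually makes $U$ and $V$ invertible with entries in $S^{-1}R_{*}$, rather than merely in the larger $R$. This is where the constructive definition of $S_U,S_V$ as Ore closures of the \emph{monoids} generated by the diagonal entries from Lemma \ref{inMod} is essential: once $QT$ lies in $R_{*}^{m\times m}$ with $Q$ diagonal in $A_{*}$ and each diagonal entry lying in $\Omega\subset S$, the formal inverses $U^{-1}=TQ^{-1}Q=T$ and $V^{-1}$ have entries in $S^{-1}R_{*}$. Everything else in the proof is formal bookkeeping about cokernels of presentations related by invertible row and column operations.
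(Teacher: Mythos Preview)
Your approach is correct and is precisely the argument the paper leaves implicit: the corollary is stated without proof, and the intended reasoning is exactly that $U$ and $V$ become invertible over $S^{-1}R_{*}$ by construction of $S$, so right multiplication by $V$ induces the claimed isomorphism of cokernels. Your verification of well-definedness via $vV = (wU^{-1})D$ and of the inverse via $uV^{-1} = (wU)M$ is the standard and expected computation.

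Two small points to clean up. First, the paper writes the corollary with $M\in R^{n\times m}$ (note the $(S^{-1}R)^n$ acting on the left), so $U$ is $n\times n$ and $V$ is $m\times m$; your write-up treats everything as $m\times m$, which you should adjust. Second, the formula in your final paragraph, ``$U^{-1}=TQ^{-1}Q=T$'', is garbled: the middle expression is a tautology and does not explain why $T$ lands in $S^{-1}R_{*}$. What you want to say is that, in the notation of Section~\ref{MCOre}, $T$ is the inverse of $U$ over $R$, Lemma~\ref{inMod} furnishes a diagonal $Q$ with entries in $A_{*}$ such that $QT\in R_{*}^{n\times n}$, and since the diagonal entries of $Q$ lie in $S$ by construction of $S_U\subset S$, one has $T = Q^{-1}(QT)$ with $Q^{-1}\in (S^{-1}A_{*})^{n\times n}$ and $QT\in R_{*}^{n\times n}$, hence $U^{-1}=T\in (S^{-1}R_{*})^{n\times n}$. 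The same reasoning applies to $V$.
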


Note that the left transformation matrix $U$ also can contain essential information about the so-called singularities of a system, which often are connected to meromorphic (and non-holomorphic) solutions.

\begin{example}
\label{EulerCont}
Over the first Weyl algebra $A_1 = R_*$, consider the single equation $(x\d) \omega = 0$. Here $U=x$ and thus $S_U = \{ x^i \mid i\in\N_0\}$. So $R_U = (S_U)^{-1} R_* \cong \K[x,x^{-1}]\langle \d \rangle \subsetneq \K(x)\langle \d \rangle$ and over $R_U$, the equation is equivalent to $\d \omega = 0$, whose solutions in any nonzero $R$-module $\kf$ contain $\K$.

On the other hand, the division by $x$ over $R_*$ is not allowed. Consider an $R_*$-module $\cal{D}'(\R)$ of distributions. Then,
we see that a 
$\K$-multiple of the Heaviside step function is a solution. Thus, $\K\cdot H(x) \oplus \K$ is a subspace of solutions of the system $(x\d) \omega = 0$.
\end{example}

\begin{example}
\label{EulerDiscrete}
Consider the univariate sequence space ${\cal S}$, that is the $\K$-vector space of all functions $f:\Z \to \K$, which is an $S_1(\K)$-module.
Recall, that a discrete analogue $H(n)$ of the Heaviside step function is defined to be 0, if $n<0$ and 1 otherwise. 

Consider the analogon of the equation above $(n(s-1))\omega = 0$ 
over the shift algebra $S_1(\K) = R_*$, cf. Example \ref{runningEx3},
where $s$ acts as $(s\omega)(n) = \omega(n+1)$.
The invertibility of $n$ is reflected in the localization with respect to $\{ (n \pm k)^m \mid k,m \in \N_0\}$ (cf. Lemma \ref{OreShift}) and implies that 
there is 1-dimensional subspace of constant solutions.

Over $R_*$, $n$ is not invertible. Moreover, the ideal $\langle n \rangle \subset R_*$ annihilates any constant multiple of the Kronecker delta $\delta_{n,0} \in {\cal S}$. 
Since $\delta_{n,0} = H(n) - H(n-1) = (s-1)(H(n-1))$, another set of solutions to the considered equation are 
$\K$-multiples of $H(n-1)$.  
Hence, as one can easily check, $\K\cdot H(n-1) \oplus \K \subset {\cal S}$ is indeed the whole set of solutions to the equation $(n(s-1))\omega=0$.
\end{example}

\section{Cyclic Vector Method}
The existence of the Jacobson form of a matrix over a simple Euclidean domain \citep{Cohn, Jacobson} is a very strong result. In particular, for a square matrix $M$ of full rank over $R$, a Jacobson form is $\Diag(1,\ldots,1,r)$ for some $r\in R\setminus\{0\}$. Then a module, presented by $M$ is isomorphic to a
cyclic module and its presentation is a principal ideal.
The method of finding a cyclic vector in a finitely presented module and obtaining a left annihilating ideal for it is also used in $D$-module theory. 

\begin{proposition}
\label{cyclicAlg}
Let $R$ be a simple OLGAED, representable as $A[\d;\sigma,\delta]$ with a
division ring $A$ over the field $\K$.
Let $M = \Diag(m_1,\ldots,m_r)$ be a full rank $r\times r$ matrix, that is $m_i\not=0$. Then $d:=d(M) = \sum \deg(m_i)$ is an invariant of the module 
$R^{r}/R^{r}M$, since it is the dimension of the module over the division ring $A$.
Let $p = [p_1,\ldots,p_r]^T\in R^{r\times 1}$ and $c\in R$ be a generator of the left annihilator ideal of $p$ in the module $R^{r}/R^{r}M$. If $\deg(c) = \sum \deg(m_i)$, then $\Diag(1, \dots, 1, c)$ is a Jacobson form of $M$.
\end{proposition}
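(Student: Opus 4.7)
The plan is to show that the left $R$-module $R^r/R^r M$ is isomorphic to the cyclic module $R/Rc$, and then to invoke the equivalence ``presenting isomorphic modules $\Longleftrightarrow$ equivalent under unimodular transformations'' which holds over the simple PID $R$, concluding that $\Diag(1,\ldots,1,c)$ is a Jacobson form of $M$.

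First I would establish the $A$-dimension count. Since $R = A[\partial;\sigma,\delta]$ admits left Euclidean division by $\partial$-degree, for any nonzero $m\in R$ of degree $n$ the residue classes $\{1,\partial,\ldots,\partial^{n-1}\}$ form a left $A$-basis of $R/Rm$, so $\dim_A R/Rm = \deg(m)$. Because $M$ is diagonal, $R^r/R^r M \cong \bigoplus_{i=1}^r R/Rm_i$ as left $R$-modules, whence $\dim_A(R^r/R^r M) = \sum_{i} \deg(m_i) = d(M)$. This simultaneously settles the invariance claim of the statement, since $A$-dimension is preserved under $R$-module isomorphism.

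Next I would exploit the hypothesis on the annihilator. Writing $\bar{p}$ for the class of $p$ in $R^r/R^r M$, the $R$-linear map $R \to R^r/R^r M$, $a\mapsto a\bar{p}$, has image the cyclic submodule $R\bar{p}$ and kernel $\Ann_R(\bar{p}) = Rc$, so $R\bar{p} \cong R/Rc$. The dimension count gives $\dim_A R\bar{p} = \deg(c)$, and by hypothesis $\deg(c) = d(M) = \dim_A R^r/R^r M$. A subspace of a finite-dimensional left $A$-vector space of equal dimension coincides with the ambient space, hence $R\bar{p} = R^r/R^r M$; the module is therefore cyclic and isomorphic to $R/Rc$.

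Finally, $N := \Diag(1,\ldots,1,c) \in R^{r\times r}$ evidently satisfies $R^r/R^r N \cong R/Rc \cong R^r/R^r M$. The main obstacle is converting this module isomorphism into unimodular matrices $U,V$ with $UMV = N$. This is precisely where the simplicity of $R$ becomes essential: the Jacobson theory over simple Euclidean domains (see \citep{Cohn,Jacobson}, rendered algorithmically in \citep{LS10}) guarantees that two full rank square matrices over such $R$ are left-right unimodularly equivalent if and only if they present isomorphic left $R$-modules. Invoking this equivalence for $M$ and $N$ closes the argument.
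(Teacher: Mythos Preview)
Your proof is correct and follows essentially the same approach as the paper: establish $\dim_A R^r/R^rM = d$, use the map $a\mapsto a\bar p$ with kernel $Rc$ to get a cyclic submodule of dimension $\deg(c)=d$, and conclude by dimension that the module is cyclic and isomorphic to $R/Rc$. The paper phrases the dimension step via the exact sequence $0\to R/\langle c\rangle \to R^r/R^rM \to \coker\varphi_p\to 0$ and reads off $\dim_A\coker\varphi_p = d-\deg(c)=0$, which is the same argument as your ``equal-dimensional subspace equals the whole space''; if anything, you are more explicit than the paper about the last step, where the module isomorphism is upgraded to unimodular equivalence via the Jacobson--Cohn theory over a simple Euclidean domain.
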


\begin{proof}
There is an $R$-module homomorphism $\varphi_p$ and the corresponding induced exact sequence
\[
0 \longrightarrow R/\langle c \rangle = R/ \ker \varphi_p \overset{\cdot p}{\longrightarrow} R^{r}/R^{r}M \longrightarrow \coker \varphi_p \longrightarrow 0.
\]
Since all $R$-modules above are finite dimensional over $A$, the dimension of $\coker \varphi_p$ is precisely $d - \deg(c)$. Hence if $\deg(c)=d$, then
\[
R^{r}/R^{r} \Diag(1,\ldots,1,c) \cong R/\langle c \rangle \cong R^{r}/R^{r}M = R^{r}/R^{r} \Diag(m_1,\ldots,m_r).
\]
\end{proof}

\begin{remark}
By using the previous Proposition, we propose the following probabilistic approach for the computation of a cyclic presentation of a module. We use the
dimension $d$ of $M$ as the certificate.
For every $1\leq i \leq r$, consider a polynomial $p_i$ of degree at most $\deg(m_i)-1$ in $\d$ with random coefficients from $A$. Compute the generator $c\in R$ of 
$\ker \varphi_p$.  
If $\deg c = d$, we are done. 
Otherwise (it means that the image of $\varphi_p$ is a proper submodule) one takes another set of random polynomials $p_i$ and repeats the procedure.
In order to turn this approach into an algorithm, one needs to obtain probabilistic estimations on the length of random coefficients like in \citep{Kaltofen} and \citep{SL97}. However, it is more complicated in the case when $A$ is non-commutative division ring and needs to be investigated in more detail. 
Our experiments, illustrated by the examples below, detect a considerable coefficient swell both in the intermediate computations and in the output. 
Thus we conclude, that the deterministic method by Leykin \citep{Leykindiss} 
is better for the case, when random vectors contain polynomials from $\K[x]$
as coefficients.  

Yet another application of this approach is the search for a proper submodule of a module. Namely, if $\deg c < d$, the image of $\varphi_p$ is such a submodule.
\end{remark}

In the following examples we work in the first rational Weyl algebra $A$. 
Consider two $3\times 3$ diagonal matrices with polynomials of degrees $1,2,3$ 
in $\d$ at the diagonal. We call these matrices $M_1$ and $M_2$.
By the remark above, in order to find a cyclic vector of the corresponging module, it is enough to consider a vector $p$ of the form $[c(x), a(x) \d + b(x), u(x) \d^2 + v(x) \d + w(x)]^T$ for $a,b,c,u,v,w\in\K[x]$. We generate the latter polynomials in a random way, taking their coefficients from $\Q$ from the range $[0,\ldots,100)$. The vector $p_1$ has coefficients of $x$-degree at most $3$ and the vector $p_2$ of $x$-degree at most $4$:

\noindent
$p_1 = $ \begin{footnotesize}
$[98x^{3}+4, (2x^{2}+17)\d+87x^{3}, (98x^{2}+11x) \d^{2}+(8x^{3}+62x^{2}+31)\d+89x]^T $
\end{footnotesize} , \\
$p_2 = $ \begin{footnotesize}
$[ 50x^{4}+13x^{3}+97x^{2}, (25x^{4}+91x+72)\d+53x^{4}+96x^{3}+90x, (41x^{3}+57x^{2})d^{2}+(36x^{4}+53x^{2}+54x+83)\d+2x^{3}+87]^T$.
\end{footnotesize}

By computing kernels as in Prop. \ref{cyclicAlg} we obtain generators $c^i_1,c^i_2$ for $i=1,2$ respectively.
Since $\deg c^i_j = 6 = \sum_{k=1}^3 \deg M_{kk}$, it follows that $p_j$ are cyclic vectors for both matrices $M_1,M_2$ and the Jacobson form of $M_i$ with respect to $p_j$ is $\Diag(1,1,c^i_j)$.

\begin{example}
Consider the matrix $M_1 = \Diag(\d,x \d^{2}+2 \d, x^{2} \d^{3}+4x \d^{2}+2 \d)$.
For the vector $p_1$ 
we obtain the cyclic generator $c^1_1=$
\begin{footnotesize}
$(1011752x^{8}-348435x^{7}-846320x^{5}-2965480x^{4})\d^{6}+(9105768x^{7}-3484350x^{6}-10155840x^{4}-38551240x^{3}) \d^{5}+(15176280x^{6}-6271830x^{5}-25389600x^{3}-115653720x^{2}) \d^{4}-35585760x \d^{3}+35585760 \d^{2}$.
\end{footnotesize}

For the vector $p_2$ 

we obtain the cyclic generator $c^1_2=$ 
\begin{footnotesize}
$(395647200x^{16}-264348000x^{15}+72499244880x^{14}-432343555560x^{13}+267049574380x^{12}+8987298499008x^{11}-45076322620512x^{10}+91959270441432x^{9}-24315286945590x^{8}+20084358713472x^{7}-19034034270714x^{6}-3963517931016x^{5}+240924562515x^{4}+239689051938x^{3}) \d^{6}+ \ldots$
\end{footnotesize}.

\end{example}

\begin{example} 
Consider the matrix $M_2 = \Diag(\d,x \d^{2}+2 \d, x^{2} \d^{3}+3x \d^{2}+\d)$.
For the vector $p_1$ 
we obtain the cyclic generator $c^2_1=$
\begin{footnotesize}
$(8352x^{12}+149292x^{11}+3213954x^{10}-8623701x^{9}+46759968x^{8}+18251056x^{7}+12525848x^{6}+1085484x^{5}+968320x^{4}) \d^{6}+\ldots$
\end{footnotesize}.

For the vector $p_2$ 
we obtain the cyclic generator $c^2_2 =$
\begin{footnotesize}
$(544946x^{21}-8586000x^{20}+4018843296x^{19}-2355826816422x^{18}+7580096636636x^{17}-91394273346228x^{16}+394488979119486x^{15}-1039358677414560x^{14}+2049350822715951x^{13}-6702303668155704x^{12}+15453420608607570x^{11}-9398963461913820x^{10}-1453404219438726x^{9}-83615730159492x^{8}+313903596685743x^{7}+646008054793470x^{6}+64626268386222x^{5}-13528366907400x^{4}-3553905268080x^{3}) \d^{6} + \ldots$
\end{footnotesize}.

\end{example}
\noindent
\textbf{Analysis of the data}. 
Since the computed cyclic generators $c^i_j$ are fraction-free, we present them
as polynomials from $\Z[x,\d]$.
We determine the following data and put them into the table.
\begin{itemize}
\item \texttt{NT} number of terms, \texttt{TD} total degree;
\item \texttt{BC}, \texttt{SC} and \texttt{AC} stand for 
the biggest resp. the smallest  resp. the arithmetic average of absolute values of coefficients; 
\item \texttt{BX}, \texttt{SX} and \texttt{AX} stand for 
the biggest resp. the smallest  resp. the arithmetic average of 
degrees in $x$ in every monomial $x^a \d^b$.
\end{itemize}

${}_{}$\\

\begin{center}
\begin{tabular}[h]{|l|c|c|c|c|c|c|c|c|}
\hline
Poly & \texttt{NT} & \texttt{TD} & \texttt{BC} & \texttt{SC} & \texttt{AC} & \texttt{BX} & \texttt{SX} & \texttt{AX}  \\
\hline
$c^1_1$ & 14 & 14 & 1.15$\cdot 10^8$ & 3.48$\cdot 10^5$ & 2.14$\cdot 10^7$ & 8 & 0 & 4.3 \\
\hline
$c^1_2$ & 85 & 22 & 3.2$\cdot 10^{15}$ & 2.6$\cdot 10^{8}$ & 2.3$\cdot 10^{14}$ & 16 & 0 & 6.8 \\
\hline
$c^2_1$ & 43 & 18 & 3.9$\cdot 10^8$ & 8.3$\cdot 10^3$ & 6$\cdot 10^7$ & 12 & 0 & 5.8 \\
\hline
$c^2_2$ & 126 & 27 & 4.95$\cdot 10^{17}$ & 5.4$\cdot 10^5$ & 2.35$\cdot 10^{16}$ & 21 & 0 & 9.4 \\
\hline
\end{tabular}
\end{center}

${}_{}$\\

Let us see, what can happen when the random polynomial coefficients from 
above are taken to be just numbers from $\K$.

\begin{example}
For the matrix $M_2$, let us take 5- and 10-digit random integers. The corresponding
vectors are then $p_1 =$ 
\begin{footnotesize} $[ 5535, 3892\d+20690, 6069\d^{2}+20660\d+17323]^T$ 
\end{footnotesize}
and $p_2 = $
\begin{footnotesize} $[1109725034, 618308146\d+1684065511, 2034108815\d^{2}+1702526110d+1361184996]^T$ 
\end{footnotesize}.
The corresponding cyclic generators are then
$c_1 =$ \begin{footnotesize} $(519739491811x^{10}+\ldots)\d^4+\ldots$ \end{footnotesize}
and $c_2 =$ \begin{footnotesize}$(689281531286620056404507706x10+\ldots)\d^4+\ldots$ \end{footnotesize}.

The coefficient data is put into the table with the same notations as before.

${}_{}$\\

\begin{center}
\begin{tabular}[h]{|l|c|c|c|c|c|c|c|c|}
\hline
Poly & \texttt{NT} & \texttt{TD} & \texttt{BC} & \texttt{SC} & \texttt{AC} & \texttt{BX} & \texttt{SX} & \texttt{AX}  \\
\hline
$c_1$ & 20 & 14 & 2.14$\cdot 10^{13}$ & 4.77$\cdot 10^{10}$ & 5.48$\cdot 10^{12}$ & 10 & 3 & 6.5 \\
\hline
$c_2$ & 20 & 14 & 6.15$\cdot 10^{29}$ & 6.89$\cdot 10^{26}$ & 8.29$\cdot 10^{28}$ & 10 & 3 & 6.5 \\
\hline
\end{tabular}
\end{center}

${}_{}$\\

As we can see from the degree in $\d$, both $c_1, c_2$ are not cyclic vectors.
Notably, numerous experiments in the setup of this example never led us to a cyclic vector for $A^3/A^3 M_2$. Instead we repeatedly obtained polynomials $c_i$ of degree 4. 
This shows, that there is a $4$-dimensional submodule $N$ of the $6$-dimensional module
 $A^3/A^3 M_2$, which dominates over others for the special choice of the form of test vectors. 
\end{example}

\section{Normal Form over a General Domain}
\label{NFGeneral}

As we have noted, there are many different-looking normal forms over a non-simple domain. 
\begin{definition}
Let $R$ be a ring. An element $r\in R$ is called \textbf{two-sided}, if $r$ is not a divisor of zero and ${}_{R}\langle r \rangle= \langle r \rangle_R$. It is called \textbf{proper}, if ${}_{R}\langle r \rangle \subsetneq R$.
\end{definition}

Though in \cite{Cohn} a two-sided element has been called \textbf{invariant}, we propose to use \textit{two-sided} instead, due to the ubiquity of the word \textit{invariant}.

Let $R$ be a domain and a $\K$-algebra. Then 
$r\in R$ is
proper two-sided if and only if
$\forall s,s'\in R$ $\exists t,t'\in R$ such that $rs = tr$ and $s'r = rt'$
. If $R$ admits a Gr\"obner basis theory, this is the same as to say ``$\{r\}$ is a two-sided Gr\"obner basis of the two-sided ideal $\langle r \rangle$''.
It is straightforward, that for a simple domain $0$ is the only proper two-sided element.

Generalizing the statement in the Example 4.4 of \citep{LS10} leads to the following result.
\begin{lemma}
\label{nft} 
Let $R$ be a non-simple Euclidean domain and a $\K$-algebra. 
Moreover, let $m \leq n$ be natural numbers and $r\in R$ be a proper
two-sided element. Then for the $2\times 2$ matrices $D_1 = \Diag(r^m,r^n)$ and $D_2 = \Diag(1,r^{m+n})$ the corresponding modules $M_1 = R^2/R^2 D_1$ and $M_2 = R^2/R^2 D_2$ are not isomorphic.
\end{lemma}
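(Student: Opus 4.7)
The plan is to distinguish the two modules by their annihilator ideals, exploiting the two-sidedness of $r$.

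First, I would identify both modules explicitly. The matrix $D_2 = \Diag(1,r^{m+n})$ kills the first component entirely, so $M_2 \cong R/\langle r^{m+n}\rangle$, a cyclic module. The matrix $D_1=\Diag(r^m,r^n)$ yields the direct sum decomposition $M_1 \cong R/\langle r^m\rangle \oplus R/\langle r^n\rangle$.

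Next, I would compute the left annihilator ideals. Because $r$ is two-sided, $\langle r^k\rangle = Rr^k = r^kR$ for every $k$, so $\ann_R(R/\langle r^k\rangle) = \langle r^k\rangle$. From this I read off
\[
\ann_R(M_2) = \langle r^{m+n}\rangle
\quad\text{and}\quad
\ann_R(M_1) = \langle r^m\rangle \cap \langle r^n\rangle = \langle r^n\rangle,
\]
where the last equality uses $m \leq n$ together with the two-sidedness of $r$ (which gives $\langle r^n\rangle \subseteq \langle r^m\rangle$). Since the annihilator of a module is an isomorphism invariant, it suffices to prove $\langle r^n\rangle \neq \langle r^{m+n}\rangle$.

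The main obstacle is precisely this last step: strict descent of the chain of two-sided ideals $\langle r^k\rangle$. I would argue as follows. Suppose $\langle r^n\rangle = \langle r^{m+n}\rangle$ with $m \geq 1$; then $r^n = a r^{m+n} = a r^m \cdot r^n$ for some $a \in R$. Since $R$ is a domain and $r^n \neq 0$ is a non-zero-divisor, right cancellation of $r^n$ gives $1 = ar^m$. Hence $1 \in Rr^m = \langle r^m\rangle \subseteq \langle r\rangle$, forcing $\langle r\rangle = R$ and contradicting the properness of $r$. Therefore $\langle r^n\rangle \supsetneq \langle r^{m+n}\rangle$, the annihilators differ, and $M_1 \not\cong M_2$ as required.
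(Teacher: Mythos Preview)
Your proof is correct and follows the same overall strategy as the paper: identify the modules, compute their annihilators as $\langle r^n\rangle$ and $\langle r^{m+n}\rangle$, and then show these two-sided ideals differ.

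The one point of divergence is the final step, and your version is cleaner. The paper proves $\langle r^n\rangle \neq \langle r^{m+n}\rangle$ by embedding $R$ into its enveloping algebra $S = R\otimes_{\K} R^{\mathrm{opp}}$, viewing two-sided ideals of $R$ as left ideals over $S$, and then cancelling $\iota(r^n)$ inside $S$ (which is again a domain) to force $\iota(r^m)$ to be a unit. You bypass this entirely: since $r^{m+n}$ is itself two-sided, $\langle r^{m+n}\rangle = R\,r^{m+n}$, so $r^n = a r^m r^n$ for some $a\in R$, and right-cancelling $r^n$ directly in the domain $R$ yields $1 = ar^m$, contradicting properness. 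The enveloping-algebra step buys nothing extra here, so your more elementary route is preferable.
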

\begin{proof}
At first, we note that the set of proper two-sided elements is multiplicatively closed, thus $r^n,r^m, r^{n+m}$ are proper two-sided.
As we can see, $\Ann_R M_2 = \langle r^{m+n} \rangle$ and $\Ann_R M_1 = \Ann_R (R/\langle r^{m} \rangle \oplus R/ \langle r^{n} \rangle) = \langle r^{m} \rangle \cap \langle r^{n} \rangle  = \langle r^{n} \rangle$. We have to show that 
the two-sided ideals  $\Ann_R M_1, \Ann_R M_2$ are not equal, then the claim follows. Let $S = R\otimes_K R^{opp}$ be the enveloping algebra of $R$ and $\iota:R\to S$ a natural embedding of $\K$-algebras, then any two-sided ideal of $R$ is a left ideal over $S$. Suppose that for two-sided ideals $\langle r^n \rangle \subseteq \langle r^{m+n} \rangle$. Over $S$ we have the inequality of left ideals $\langle \iota(r^{n}) \rangle \subseteq \langle \iota(r^{m+n}) \rangle$, hence there exists $s\in S$, such that $\iota(r^n) = s \iota(r^{m+n}) = s \iota(r^{m}) \iota(r^{n})$. Then $(s \iota(r^m) - 1)\cdot \iota(r^n)  = 0$ and hence $s \iota(r^m) = 1$, since $S$ is a domain. Thus, $\langle \iota(r^m) \rangle = S$ as a left ideal and hence
$\langle r^m \rangle = R$, what is a contradiction to the assumption that $r$ is a proper two-sided element.
\end{proof}

Recall that for $a,b \in R$, one says that $a$ totally divides $b$ (and denotes $a \mid\mid b$) if and only if there exists a two-sided $c\in R$, such that $a \mid c \mid b$\footnote{Recall, that $a\mid c$ if $\exists d\in R$ such that either $c=ad$ or $c=da$ holds.} \citep{Cohn}. Moreover, $a \mid\mid a$ if and only if $a$ is two-sided element.

\begin{example} 
\label{exNFT}
In the first shift algebra $R_*=S_1$, cf. Example \ref{runningEx3}, consider 
the diagonal matrices
\[
D_1 = 
\left[\begin{array}{cc}
s+x & 0 \\
0 & s(s+x) 
\end{array}\right]
\quad \text{and} \quad
D_2 = 
\left[\begin{array}{cc}
s & 0 \\
0 & s(s+x) 
\end{array}\right].
\]
At first, let us analyze the appearing elements for the proper two-sidedness.
By computing two-sided Gr\"obner bases over $S_1$, we obtain that $s$ is
already such a basis, whereas ${}_{S_1}\langle s+x \rangle_{S_1} = {}_{S_1}\langle x,s \rangle_{S_1}$ and 
${}_{S_1}\langle s(s+x) \rangle_{S_1} = {}_{S_1}\langle (x+1)s,s^2 \rangle_{S_1}$. Thus neither $s+x$ nor $s(s+x)$ are proper two-sided in $R_*$. In the localization $R = (\K[x]\setminus\{0\})^{-1} R_*$, we even see that 
${}_{R}\langle s+x \rangle_{R} = R$ and
${}_{R}\langle s(s+x) \rangle_{R} = {}_{R}\langle s \rangle_{R}$.

In the matrix $D_2$ there is a total divisibility on the diagonal. Indeed, $s$ is proper two-sided and $s \mid s(s+x)$. Hence, for any $f \in R$ the result of 
reduction of $s$ with $fs(s+x)=f(s+x+1)s$ will be $(1 - f(s+x+1))s \in \langle s \rangle$. Hence, no reduction procedure will lead to
a unit instead of $s$. 

On the other hand, we see that  $s(s+x) \in \langle s \rangle$. Then $s(s+x) - fs = (s+x+1 -f)s$ has degree $1$ in $s$ only for $f=s-g(x)$, where $g(x)\neq -(x+1)$. This reduction produces $[0, s(s+x)]^T - (s-g(x))\cdot [s,s]^T = [-(s-g(x))s, (g(x)+x+1)s]^T$, which does not lower the degree in the column but only interchanges terms of degrees 1 and 2. Hence no further essential simplification is possible, since replacing $s(s+x)$ by a similar factor can neither lower the degree nor lead to a two-sided element.

In the matrix $D_1$, 
there is no total divisibility on the diagonal, hence we will be able to achieve the lower degree. And indeed, for 
\[
U = \left[
\begin{array}{*{2}{c}}
1 & 1 \\
s^{2}+xs+2s & s^{2}+xs+2s+x+2
\end{array}
\right], 
\;
V=
\left[
\begin{array}{*{2}{c}}
s+1 & -s^{2}-xs-s \\
-1 & s+x-1
\end{array}
\right]
\]
which are $R$-unimodular, we obtain that
\[
U D_1 V 
=
\left[
\begin{array}{*{2}{c}}
x & 0 \\
0 & x+2
\end{array}
\right]
\cdot 
\left[
\begin{array}{*{2}{c}}
1 & 0 \\
0 & s(s+x)(s+x-1)
\end{array}
\right].
\]

Thus $D_1$ possesses even the Jacobson normal form $\Diag(1,s(s+x)(s+x-1))$ over $R$.
\end{example}

\section{Implementation and Examples}
\label{Impl}

Our implementation of the computation of a diagonal form together with
transformation matrices is called \texttt{jacobson.lib}. It is distributed together with \textsc{Singular} \citep{Singular} since its version 3-1-0.
In the Appendix we put an example of a \textsc{Singular} session with
input, output and explanations.

\subsection{Comparison}

There are other packages to compute diagonal and Jacobson forms. 
The package \textsc{Janet} for \textsc{Maple}  \citep{Robertz,CQR}
directly follows the classical algorithm with no special optimizations.
In \textsc{Maple} packages by H.~Cheng et~al. \citep{BCL06, CL07, DCL08} modular (\textsc{Modreduce}) and fraction-free (\textsc{FFreduce}) versions of an order basis of a polynomial matrix $M$ from an Ore algebra $A$ are implemented. 
The computation of the left nullspace of $M$ and indirectly the Popov form of $M$ uses order bases.
There are also experimental implementations, mentioned in \citep{CulianezQuadrat} and \citep{Middeke}. 

In \citep{LS10}, we compared our implementation with the one by D.~Robertz.
In turned out that with our implementation, one experiences moderate swell of coefficients and obtains uncomplicated transformation matrices.

Unlike in theoretical considerations, our implementation usually returns diagonal matrices with elements of descending degrees on the main diagonal.

\subsection{Examples}
\label{Examplez}

\begin{example}
Consider two versions of the Example 3.8 in \citep{LS10}.

\textbf{(A)}. In the first Weyl algebra $R_*=K[x][\partial; \id, \frac{d}{dx}]$, consider the matrix
\begin{displaymath}
 M=\left[ \begin{array}{cc} \partial^2-1 & \partial+1 \\ \partial^2+1 & \partial-x  \end{array}\right]
\in R^{2 \times 2}.
\end{displaymath}

Then the algorithm computes
\[
UMV=
\begin{footnotesize}
\left[\begin{array}{cc}
(x+1)^2\partial^2+2(x+1)\partial-(x^2+1) & 0\\0 & 1\end{array}\right] = D,
\end{footnotesize}
\]
\[
U = 
\begin{footnotesize}
\left[ \begin{array}{cc} -(x+1)\partial+x^2+x+1 & (x+1)\partial+x \\ \partial-x &  -\partial-1\end{array}\right], 
\end{footnotesize}
\
V = 
\begin{footnotesize}
\left[\begin{array}{cc} 
1 & 0\\(x+1)\partial^2+2\partial-x+1 & 1\end{array}
\right]
\end{footnotesize}.
\]

Let us analyze the transformation matrices for $R_*$-unimodularity. Indeed,
$V$ is unimodular over $R_*$ since it admits an inverse $V'$. However, $U$ is unimodular over $S^{-1} R_*$ for $S = \{(x+1)^i \mid i\in\N_0\}$ (cf. Lemma \ref{OreWeyl}) since $U\cdot Z = W$ and $W$ is invertible in the ring containing the inverse of $(x+1)^2$, that is over a ring, containing $S^{-1}R_*$.
\[
\begin{footnotesize}
V' = \left[
\begin{array}{cc}
1 & 0 \\
-(x+1) \d^{2}-2\d+x-1 & 1
\end{array}\right], 
Z = \left[
\begin{array}{cc}
2(\d+1) & (x+1)\d+x-2 \\
2(\d-x) & (x+1)\d-x^{2}-x-3
\end{array} \right],
W = \left[
\begin{array}{cc}
0 & -4(x+1)^{2} \\
2 & 5(x+1)
\end{array}\right]
\end{footnotesize}.
\]

\textbf{(B)}.
In the first shift algebra $S_1$, cf. Example \ref{runningEx3}, one has 
\[
\Diag \begin{footnotesize}  ( \ (x+1)(x+2) s^{2}+2(x+1)s-(x-1)(x+2), 1) \end{footnotesize} = 
\]
\[
\begin{footnotesize}
\left[\begin{array}{cc}
-(x+1)s+x(x+2) & (x+1)s+x+2 \\
-s+x+1 & s+1
\end{array}\right]
\cdot
\left[\begin{array}{cc}
s^{2}-1 & s+1 \\
s^{2}+1 & s-x
\end{array}\right]
\cdot
\left[\begin{array}{cc}
1 & 0 \\
-(x+2)s^{2}-2s+x & 1
\end{array}\right]
\end{footnotesize}.
\]
Denote the equality above as $D=UMV$, it turns out, that
$V$
is even $R_*$-unimodular. $U$ becomes
invertible in a localization, containing $(x+2)^{-1}$, thus, by Lemma \ref{OreShift}, containing a ring $S^{-1}R_*$ for 
$S = \{ (x\pm n)^m \mid n,m\in\N_0\}$.
\end{example}

\begin{example}
Consider now the same matrix as in Example \ref{runningEx3} over the first
Weyl algebra $R_* = \K\langle x,\d \mid \d x = x \d +1 \rangle$, respectively
$R = \K(x)\langle \d \mid \d x = x \d +1 \rangle$:
\[
M = 
\begin{footnotesize}
\left[
\begin{array}{*{5}{c}}
(x-1)\d+x^{2}-x &  & x\d+x^{2} &  & (x+2)\d+x^{2}+2x \\
\d+x & & 0 &  & \d
\end{array}
\right]
\end{footnotesize}.
\]
We obtain
\[
D = 
\begin{footnotesize}
\left[
\begin{array}{*{3}{c}}
0 & 3x^{2}(x+2)^2 & 0 \\
0 & 0 & 1
\end{array}
\right]
\end{footnotesize}
, \
U = 
\begin{footnotesize}
\frac{1}{2}
\left[
\begin{array}{*{2}{c}}
x(x+2) \d-2(x+1) & -x(x+2)^{2}\d-(x^{4}+4x^{3}+3x^{2}-4x-4) \\
(x+1)\d-2 & -(x+1)(x+2)\d-(x^{3}+3x^{2}+x-3)
\end{array}
\right]
\end{footnotesize}
,
\]
\[
 V = 
\left[
\begin{footnotesize}
\begin{array}{*{3}{c}}
-(x^{4}-9x^{2})\d^{2}-(x^{5}-x^{3}-18x)\d-(6x^{4}-24x^{2}+18) & v_{12} & 0 \\
-(3x^{3}-27x)\d^{2}-(x^{5}+5x^{4}-9x^{3}-18x^{2}-81)\d-(x^{6}+2x^{5}-6x^{4}+8x^{3}+9x^{2}-54x) & v_{22} & 0 \\
(x^{4}-9x^{2})\d^{2}+(2x^{5}-10x^{3}-18x)\d+(x^{6}-15x^{2}+18) & v_{23} & 1
\end{array}
\end{footnotesize}
\right],
\]
where 
\begin{footnotesize}
$v_{12}= (x^{3}+3x^{2}-2x)\d^{2}+(x^{4}+3x^{3}+4x^{2}+6x+2)\d+(5x^{3}+12x^{2}-6)$, $v_{22}= (3x^{2}+9x-6)\cdot d^{2}+(x^{4}+8x^{3}+13x^{2}+11x+27)\cdot d+(x^{5}+5x^{4}+6x^{3}+14x^{2}+42x+23)$ 
\end{footnotesize}
and
\begin{footnotesize}
$v_{23}= -(x^{3}+3x^{2}-2x)\d^{2}-(2x^{4}+6x^{3}+2x^{2}+6x+2)\d-(x^{5}+3x^{4}+8x^{3}+18x^{2}+12x-6)$
\end{footnotesize}.

\noindent
$U$ is unimodular in the localization with respect to $S_U$, which is generated as monoid by $x,x+1,x+2$.
$V$ is unimodular in the localization with respect to $S_V$, which is generated by $x,x+2,x+3,x-3$. Thus, the isomorphism of modules lifts from $R$ to $S^{-1}R_*$, where $S = \{ x^{i_1}(x+1)^{i_2}(x+2)^{i_3}(x+3)^{i_4}(x-3)^{i_5} \mid i_j\in\N_0\}$.\\

The same system over the shift algebra was computed in detail in Example \ref{runningEx3}. There it turns out, that $U$ is unimodular over a ring
containing $(x+2)^{-1}$ and $V$ is unimodular over a ring 
containing the inverses of $\{x-2,x-1,x,x+3,x+4\}$. Due to Lemma \ref{OreShift}, the needed Ore set for the localization is not smaller than $S = S_U = S_V = \{ (x\pm n)^m \mid n,m \in\N_0\}$.

\end{example}


\section{Conclusion, Further Research and Open Problems}

\subsection{Further Research}

We do not perform an analysis of the theoretical complexity of our algorithm, since it has been designed by using Gr\"obner bases, thus the formal complexity is too high. Indeed, one can see, that we use Gr\"obner bases rather for convenience, that is in order to present an algorithm, working over arbitrary OLGAED. The original (non-fraction-free) algorithm can be reformulated in terms of extended left and right greatest common divisors over a concrete algebra. To the best of our knowledge, the theoretical complexity of such GCD algorithms depends on the given algebra. For rational Weyl algebras over general fields of characteristic zero there is a paper \citep{Grig90}. It is interesting to find estimations for classical operator algebras like the shift algebra, $q$-Weyl and $q$-commutative algebras with rational coefficients. Having this information, one could perform complexity analysis for the kind of algorithms we propose. It was reported in \citep{Middeke}, that the Jacobson form can be computed in polynomial time. In our opinion this should be formulated more precisely in the framework, proposed in \citep{Grig90}. We do not claim that our algorithm is superior in terms of theoretical complexity to the others but stress, that its fraction-free version is widely applicable in practical computations. In particular, our algorithm returns transformation matrices with reasonable sizes of their entries and their coefficients. 

Concerning the further development of our implementation in \textsc{Singular} \citep{Singular} called \texttt{jacobson.lib}, we plan the following enhancements.
In order to provide diagonal form computations over $q$-algebras like $q$-commutative, $q$-shift, $q$-difference and $q$-Weyl algebras, we need to find involutive antiautomorphisms. It turns out, that there are no $\K(q)$-linear but $\K$-linear involutions, which act on $q$ as a non-identical involutive automorphism of $\K(q)$.
We will work on finding such involutions and adapt the implementation to the more general situation. Moreover, it is possible to employ modular Gr\"obner bases in the implementation.

We have demonstrated, how working with a ring $R_*$ and its localization $S^{-1}R_*$ with respect to some Ore set $S\subset R_*$ is used in various situations. In this paper we concentrated on rings of polynomials respectively rational functions. The same ideas in theory immediately apply to operator algebras $R_* = K[[x]]\langle \d \rangle$ and $R = K((x))\langle \d \rangle$. However, in practical computations with the computer we are restricted to finite power series, presented by a Laurent polynomial.

For the Weyl and $q$-Weyl algebras over a field $\K$ with $\Char \K=0$, algorithmic computations are possible over the local ring $\K[x]_{\mathfrak{m}_p}$ for $p\in\K^n$ and $\mathfrak{m}_p = \langle x_1 - p_1,\ldots,x_n - p_n \rangle$. Notably, the Ore completion of the set $\K[x]\setminus\{\mathfrak{m}_p\}$ over shift and $q$-shift algebras will contain zero. Thus, there is no
analogon to the situation with ($q$)-Weyl algebras.


\subsection{Open Problems}

\textbf{1}. From the description and the proof of the Algorithm \ref{diagonalPoly}, we need in principle only the existence of a terminating Gr\"obner basis algorithm over $R$ resp. $R_*$. The latter relies on the constructivity of basic operations with fractions in $R$ via $R_*$. Thus the Algorithm \ref{diagonalPoly} immediately extends to the case of a general OLGA $(R,B,I)$ (cf. Def. \ref{OLGA}) for a nontrivial completely prime ideal $I\subset B$ by \cite{BGV}. In which bigger class of algebras one can perform concrete computations? One can work with local rings like $\K[x]_{\mathfrak{m}_p}$, $p\in\K^n$ and $\K[[x]]$ for $\Char \K =0$ or with $\K\{x\}$ for $\K=\R,\C$ as coefficient domains.

\textbf{2}. The algorithm computes $U,V,D$ such that $UMV=D$. It is clear,
that $U,V$ are not unique. By fixing $M$ and $D$, how can one describe the set $\{ (U,V) \mid UMV=D\}$?
Does there exist $U'$ resp. $V'$ from this set, such that special properties like unimodularity hold? This is connected, in particular, to the recent results of A.~Quadrat and T.~Cluseau \citep{CQ10}. Namely, they prove the existence of matrices $U$ and $V$, which are unimodular over $R_*$ for certain situations and present an algorithm for the computation of these matrices.

\textbf{3}. One of the most important problems in non-commutative computer algebra is to show algorithmically, that two given finitely presented modules are not isomorphic. 
It is still open even for cyclic modules over general simple Euclidean Ore domain. Namely, let $R$ be the latter domain and $a,b\in R$ with $\deg a = \deg b >0$. Up to now there is no algorithm, which determines that $R/\langle a \rangle \not\cong R/\langle b \rangle$. However, there are several situations, where 
this question has been solved.

In \cite{CQ08}, the authors presented a semi-algorithm for finitely presented modules. Given degree bounds for the variables, the algorithm first looks for matrices, determining a homomorphism. If such a pair has been found, the kernel and cokernel of the corresponding homomorphism are computed and returned.

\textbf{4}. As we have seen, over non-simple domains we have many different normal forms. Starting with classical operator algebras like the ($q$-)shift algebra and $q$-Weyl algebra, it is important
to describe possible normal forms.

\textbf{5}. As in the previous item, what is the most probable normal form for a random square matrix (which is then of full rank) over a non-simple domain? Though it seems that this might depend on the algebra, we conjecture the $\Diag(1,..,1,p)$, that is a Jacobson form, is the most probable one. Such approach might lead to the classification as in \textbf{4}.

\section*{Acknowledgments}

We express our gratitude to Eva Zerz and Hans Sch\"onemann for discussions and their advice on numerous aspects of the problems, treated in this article. We thank Daniel Robertz, Johannes Middeke and Howard Cheng for explanations about respective implementations. We are grateful to Mark Giesbrecht, George Labahn and Dima Grigoriev for discussions concerning normal forms for matrices and complexity of algorithms. Many thanks to Moulay Barkatou and Sergey Abramov for explanations of their algorithms for linear functional systems. 
The first author is grateful to the SCIEnce project (Transnational access) at RISC for supporting his visits to RISC and the usage of computational infrastructure at RISC.
We would like to thank the anonymous referees, who helped us a lot with their insightful remarks and suggestions. Kristina Schindelar carried out her work on this paper during her Ph.D. studies at Lehrstuhl D f\"ur Mathematik of RWTH Aachen University.


\section{Appendix. The Code for an Example}
\label{App}
Consider the code for Example \ref{runningEx3}. 
After starting a \textsc{Singular} session, we need to set up the algebra first.

\begin{verbatim}
> LIB "jacobson.lib"; // load the library
> ring w = 0,(x,s),(a(0,1),Dp);
\end{verbatim}
\noindent
The ring \texttt{w} is a commutative one in variables \texttt{x,s} over the
field $\Q$ (0 stands for the characteristic). The substring \texttt{(a(0,1),Dp)} defines a monomial ordering on \texttt{w}, which in this case is an
extra weight ordering, assigning weights 0 to x and 1 to s. If two monomials
are of the same weight, they are further compared with \texttt{Dp} (degree lexicographic ordering). The described ordering mimicks the ordering on the rational shift algebra. For computations one can also use other orderings, like \texttt{Dp} or \texttt{dp}, cf. the \textsc{Singular} manual.

\begin{verbatim}
> def W=nc_algebra(1,s); // set up shift algebra
> setring W; 
\end{verbatim}
\noindent
This code creates and sets active a new ring \texttt{W} as a non-commutative ring from \texttt{w} with the relation $s \cdot x = 1\cdot xs + s$.
By executing \texttt{W;} a user will obtain the description of the
active ring.

\begin{verbatim}
> matrix m[2][3]=
>   x*s-s+x^2-x, x*s+x^2, x*s+2*s+x^2+2*x,
>   s+x,0,s;
> list J=jacobson(m,0);
\end{verbatim}
\noindent
Here the matrix was entered and the algorithm called. Note, that putting
a string \texttt{printlevel=2;} before the \texttt{jacobson} call will
output the progress of the algorithm. Higher values of \texttt{printlevel}
will lead to more details printed during the execution.
The list \texttt{J} has three entries, namely $U,D,V$.

\begin{verbatim}
> print(J[1]*m*J[3]-J[2]); // check UmV=D
==> 0,0,0,
    0,0,0 
> print(J[2]); // that is D
==> 0,x4+3x3-x2-3x,0,
    0,0,           x 
> print(J[1]); // that is U
==> 0,  -1/4x2-x-3/4,
    1/4,-1/4x-1/2
> print(J[3]); // that is V
==> _[1,1],_[1,2],xs2-s2+x2s-xs,
    _[2,1],_[2,2],_[2,3],       
    _[3,1],_[3,2],_[3,3]
\end{verbatim}
\noindent
The symbols \verb?_[2,2]? before are printed, when the entries are long.
One can access them as single polynomials
\begin{verbatim}
> print(J[3][2,2]);
==> -21s2-7x2s-2xs+11s-3x3-2x2+13x-8
\end{verbatim}

\end{document}